\definecolor{astral}{RGB}{46,116,181}
\newtheorem{theorem}{Theorem}[section]
\newtheorem{lemma}[theorem]{Lemma}
\newtheorem{definition}[theorem]{Definition}
\newtheorem{example}[theorem]{Example}
\newtheorem{remark}[theorem]{Remark}
\newtheorem{corollary}[theorem]{Corollary}
\newtheorem{proposition}[theorem]{Proposition}
\definecolor{darkslategray}{rgb}{0.18, 0.31, 0.31}
\definecolor{warmblack}{rgb}{0.0, 0.26, 0.26}
\newcommand{\mb}{\mathbb}
\newcommand{\rg}{\mathscr{R}}
\newcommand{\mbc}{\mathbb{C}}
\newcommand{\nl}{\mathscr{N}}
\newcommand{\ind}{{\mathrm {ind}}}
\newcommand{\Ind}{{\mathrm {Ind}}}
\newcommand{\mc}[1]{\mathcal {#1}}
\newcommand{\dg}{{\dagger}}
\newcommand{\m}{{*_{M}}}
\newcommand{\mrk}[1]{{\mathrm{rank}_M}({#1})} 
\newcommand{\trk}[1]{{\mathrm{rank}_t}({#1})}  
\tikzset{style green/.style={
    set fill color=green!50!lime!60,
    set border color=white,
  },
  style cyan/.style={
    set fill color=cyan!90!blue!60,
    set border color=white,
  },
  style orange/.style={
    set fill color=black!10!blue!10,
    set border color=white,
  },
  hor/.style={
    above left offset={-0.15,0.31},
    below right offset={0.15,-0.125},
    #1
  },
  ver/.style={
    above left offset={-0.1,0.3},
    below right offset={0.15,-0.15},
    #1
  }
}
\tikzset{%
  >={Latex[width=2mm,length=2mm]},
            base/.style = {rectangle, rounded corners, draw=black,
                           minimum width=4cm, minimum height=1cm,
                           text centered, font=\sffamily},
  activityStarts/.style = {base, fill=blue!25},
       startstop/.style = {base, fill=red!25},
    Rati44/.style = {base, fill=green!25},
    Rati55/.style = {base, fill=green!25},
    Rati56/.style = {base, fill=green!25},
    Rati60/.style = {base, fill=red!25},
         process/.style = {base, minimum width=2.5cm, fill=orange!15,
                           font=\ttfamily},
}
\definecolor{lime}{HTML}{A6CE39}
\definecolor{lightblue}{rgb}{0.0, 0.0, 0.5}
\DeclareRobustCommand{\orcidicon}{%
	\begin{tikzpicture}
	\draw[lime, fill=lime] (0,0)
	circle [radius=0.16]
	node[white] {{\fontfamily{qag}\selectfont \tiny ID}};
	\draw[white, fill=white] (-0.0625,0.095)
	circle [radius=0.007];
	\end{tikzpicture}
	\hspace{-2mm}
}
\xdef\csname orcid\x\endcsname{\noexpand\href{https://orcid.org/\csname orcidauthor\x\endcsname}{\noexpand\orcidicon}}
\newcommand\reallywidetilde[1]{\ThisStyle{%
  \setbox0=\hbox{$\SavedStyle#1$}%
  \stackengine{-.1\LMpt}{$\SavedStyle#1$}{%
    \stretchto{\scaleto{\SavedStyle\mkern.2mu\AC}{.30\wd0}}{.4\ht0}%
  }{O}{c}{F}{T}{S}%
}}
\def\wtilde#1{
  \reallywidetilde{#1}}
\begin{document}

\begin{frontmatter}

\title{
Computation of $M$-QDR decomposition of tensors and applications 
}
\author{Krushnachandra Panigrahy$^{a,1}$, Biswarup Karmakar$^{a,2}$, Jajati Keshari Sahoo$^{b}$\footnote{Corresponding Author}, Ratikanta Behera$^{a,3}$, Ram N. Mohapatra$^{c}$} 
\vspace{.3cm}

\address{  
$^{a}$Department of Computational and Data Sciences, Indian Institute of Science Bangalore, 560012, India.\\
                        \textit{E-mail$^{2}$}: \texttt{krushnachand@iisc.ac.in}\\
                        \textit{E-mail$^{4}$}: \texttt{ratikanta@iisc.ac.in}\\
                         \textit{E-mail$^{1}$}: \texttt{biswarupk@iisc.ac.in}\\
                        \vspace{.3cm}
      $^{b}$Department of Mathematics, BITS Pilani, K.K. Birla Goa Campus, Goa, India\\
        \textit{E-mail}: \texttt{jksahoo@goa.bits-pilani.ac.in}\\
        $^{c}${Department of Mathematics, University of Central Florida, Orlando, Florida, USA\\ \textit{E-mail\,$^{c}$}: \texttt{ram.mohapatra@ucf.edu}}\\
}
    
\begin{abstract}
The theory and computation of tensors with different tensor products play increasingly important roles in scientific computing and machine learning. Different products aim to preserve different algebraic properties from the matrix algebra, and the choice of tensor product determines the algorithms that can be directly applied. This study introduced a novel full-rank decomposition and $M$-$\mc{QDR}$ decomposition for third-order tensors based on $M$-product. Then, we designed algorithms for computing these two decompositions along with the Moore–Penrose inverse, and outer inverse of the tensors. In support of these theoretical results, a few numerical examples were discussed. In addition, we  derive exact expressions for the outer inverses of tensors using symbolic tensor (tensors with polynomial entries) computation. We designed efficient algorithms to compute the Moore-Penrose inverse of symbolic tensors. The prowess of the proposed $M$-$\mc{QDR}$ decomposition for third-order tensors is applied to compress lossy color images.

\end{abstract}

\begin{keyword}
$M$-Moore-Penrose inverse, outer inverse, QDR decomposition, $t$-product 
\end{keyword}
\end{frontmatter}

\section{Introduction and Preliminaries}
Most researchers have encountered phrase tensor computations and applications \cite{kolda09, Liang, Qi07, qi2017}. It is a higher-dimensional generalization of a matrix and vector that has become extremely popular in many advanced mathematical and scientific fields (e.g., such as numerical multilinear algebra \cite{Qi07}, image processing \cite{martin13,soltani16}, computer vision \cite{hao2013,hu16}, face recognition \cite{kilmer13}, and robust tensor  Principal Component Analysis (PCA) \cite{liu18}). The field of tensor computations has  grown significantly, with many powerful software tools (e.g., MATLAB, TensorFlow, PyTorch and NumPy) available in the public domain. These tools have significantly accelerated research and applications in the aforementioned scientific fields, including machine learning \cite{SidLaP17} and scientific computing \cite{qi2017, tarzanagh2018}, where tensor techniques are increasingly important. In recent years, a rapidly growing body of literature has focused on tensor computations, particularly in the context of different tensor products, including  $t$-product \cite{Braman10, kilmer13, martin13}, $M$-product \cite{kernfeld2015}, Einstein products \cite{ein, BraliNT13, Ji2018Drazin,Ji2020outer}, $n$-mode products \cite{BaderKolda07}, cosine transform products ($c-$product \cite{kernfeld2015}), and Shao's general product \cite{Shao13}. 

In contrast, tensor decomposition provides a robust framework for extracting meaningful information from complex datasets, often revealing patterns and relationships that are not apparent in traditional matrix-based analyses \cite{kolda09}. Following foundational work on different tensor products, the decomposition of third-order tensors has emerged as a significant area of research in multilinear algebra and its applications. Verity of tensor decomposition (e.g., ${QR}$  decomposition \cite{kilmer13, QRBehera}, singular value decomposition \cite{kilmer11, behera2022, BraliNT13}, CUR decomposition \cite{tarzanagh2018}, ${URV}$ decomposition \cite{che22} is discussed \cite{hao2013, GenweiEin16} based on $t$-product and the Einstein product. Additionally, each  decomposition has its own strengths and is suited to different types of problems. The choice between them often depends on the specific requirements of the application, such as computational efficiency, interpretability, or accuracy of the approximation.

The main aim of this study was to discuss QDR decomposition based on the M-product.  $M$-product is a family of tensor-tensor products with given fixed  invertible linear transforms, which generalize the previously established $t$-product \cite{kilmer11} and c-product \cite{kernfeld2015}. Following the $M$-Product structure, we introduce two important tensor decompositions: full-rank decomposition and $M$-$\mc{QDR}$ decomposition. Following these two decompositions, we designed effective algorithms for Moore-Penrose inverses and outer inverses of tensors.

It is well known that computations involving square-root entries are challenging for algebraic computations and symbolic polynomial computation of tensors. Therefore, scientists prefer to design algorithms; such as such as polynomial rings or algebraic extensions of rational numbers,  where square roots are avoided or carefully managed. It is well known that the square root of some tensor polynomials often occurs when generating tensor QR-factorization. Generating expressions that include square roots can be avoided  using  tensor QDR-decomposition. In addition, our proposed decomposition in symbolic computation offers several advantages, including the fact that symbolic QDR decomposition provides exact factorization of tensors without numerical approximations and preserves the algebraic structure. At the same time, it avoids the numerical errors associated with floating-point arithmetic. Specifically, it is helpful for ill-conditioned tensors where numerical methods may  fail. In contrast, symbolic QDR decomposition is more efficient for tensors with many zero or symbolic zero entries. This avoids unnecessary computations on the zero elements. This decomposition is efficiently implemented in Mathematica, Maple, or SymPy systems. 

The volume of visual data generated daily continues to surge exponentially,  driven by high-resolution cameras, advanced medical imaging, satellite observations, and the ubiquity of multimedia content. Thus, the substantial data requirements for high-quality digital images present challenges in terms of the transmission speed and storage costs. Consequently, there is a pressing need for efficient data compression techniques to reduce the file size of these images while maintaining their visual integrity. We used the $M$-product-based QDR decomposition algorithm for lossy image compression. Lossy image compression is a technique for  reducing digital image file sizes by intentionally discarding  image data. This technique balances the trade-off between image quality and file size, thereby achieving significant reductions in storage space and transmission bandwidth at the cost of  image fidelity. The main contributions of the manuscript are as follows:

\begin{itemize}
    \item[(1)] We introduce full-rank decomposition (FRD) and $M$-$\mc{QDR}$ decomposition based on $M$-product, along with powerful algorithms for computation.  
     \item[(2)] We compute the generalized inverses of the tensors based on the FRD in the framework of the $M$-product. 
    \item[(3)]  We designed an effective algorithm for Moore-Penrose inverses using the FRD and the outer inverse of the tensors.
\item[(4)] We discuss the application of tensor-based QDR decomposition for lossy image compression.
    \item[(5)] We compute the outer inverses of the tensors based on $M$-$\mc{QDR}$ decomposition via symbolic computation based on the $M$-product. 
\end{itemize}

The remainder of the paper is organized as follows. In Section \ref{sec:pre}, we present some notations and definitions of third-order tensors \cite{kilmer13, kilmer11, martin13, soto23}, which help prove the main results. Section \ref{Sec2} presents the FRD of a tensor in the framework of $M$-product. In addition to this, we designed an algorithm for computing FRD and the generalized inverse of a tensor. In Section \ref{sec:tqdr}, we propose $M$-QDR decomposition of a tensor via $M$-product with the help of the FRD for computing the outer inverse of tensors. We demonstrate the application of this decomposition to color lossy image compression. In Section \ref{sec:outsy} we present the computation of outer inverses via symbolic computation. Finally, we discuss concluding remarks in Section \ref{sec:con}.

\subsection{Preliminaries}\label{sec:pre}
We denote the frontal slices of the tensor $\mathcal{A} \in \mathbb{C}^{m \times n \times p}$ as $\mathcal{A}^{(i)}=\mathcal{A}(:,:,i)$ for $i=1,2,\cdots, p$. Similarly, the tube fibers of $\mathcal{A}$ are denoted as $\mathcal{A}(i, j,:)$, $\mathcal{A}(i,:,k)$, and $\mathcal{A}(:,j,k)$.

\begin{definition}\cite{kolda09}
    The $3$-mode product between a tensor ${\mc{A}} \in \mb{C}^{m \times n \times p}$ and  a matrix $U\in\mb{C}^{ q \times p}$ is denoted by $\mc{A}\times_3 U$ and is defined as\[(\mc{A}\times_3 U)(i_1,i_2,k)=\sum_{i_3=1}^p \mc{A}(i_1,i_2,i_3)B(k,i_3)\quad \text{where}\quad  k\in \{1,\ldots,q\}.\]
\end{definition}
\noindent Let ${\mc{A}} \in \mb{C}^{m \times n \times p}$ and ${\mc{B}} \in \mb{C}^{n \times q \times p}$, then the face-wise product \cite{kernfeld2015}, denoted by $\mc{A}\Delta\mc{B}$, is defined as \[
    (\mc{A}\Delta\mc{B})^{(i)}=\mc{A}^{(i)}\mc{B}^{(i)}.
    \]
Based on $3$-mode and face-wise product, Kernfeld {\it et al.} \cite{kernfeld2015} introduced the $M$-product for tensors.
\begin{definition}\cite{kernfeld2015}
   Let $\mc{A},~\mc{B} \in\mb{C}^{m\times n\times p}$. Consider an invertible matrix $M\in\mb{C}^{p \times p}$. 
    Then the $M$-product of $\mc{A}$ and $\mc{B}$ is denoted by $\mc{A}\m\mc{B}$, and is defined as 
    \begin{equation*}
        \mc{A}\m\mc{B}=(\Tilde{\mc{A}}\Delta\Tilde{\mc{B}})\times_3 M^{-1}, \textnormal{~where~} \Tilde{\mc{A}}=\mc{A}\times_3 M \textnormal{~and~} \Tilde{\mc{B}}=\mc{B}\times_3 M.
    \end{equation*}
    
\end{definition}
Notice that from here, we will consider the matrix $M$ to be invertible and the tilde ($\Tilde{.}$) notation when multiplied by $M$, that is $(\Tilde{.})=(.)\times_3M$. Note that the $M$-product is identical to the $t$-product \cite{kilmer11} when considering $M$ as the unnormalized DFT matrix. Further, the $M-$product is identical to the $c-$product \cite{kernfeld2015} when considering the discrete cosine transform (DCT) matrix. See \cite{sahoo2024computation} for more details about the matrix constructions. 

The following result immediately follows from the definition of $3$-mode product, $M$-product, and face-wise product.
\begin{proposition}\label{prop1.11}
Let $\mc{A},~\mc{B} \in\mb{C}^{m\times n\times p}$ and $M\in\mb{C}^{p \times p}$. Then 
    \begin{enumerate}
        \item[(i)] $(\mc{A}\m\mc{B})\times_3M=\Tilde{\mc{A}}\Delta\Tilde{\mc{B}}=(\mc{A}\times_3M)\Delta(\mc{B}\times_3M)$.
        \item[(ii)] $(\Tilde{\mc{A}})^*=\wtilde{\mc{A}^*}=\mc{A}^*\times_3M$.
        \item[(iii)] $\wtilde{\mc{A}+\mc{B}}=\Tilde{\mc{A}}+\Tilde{\mc{B}}$.
        \item[(iv)] $\left(\wtilde{\mc{A}\m\mc{B}}\right)^{-1}=(\Tilde{\mc{B}})^{-1}\Delta(\Tilde{\mc{A}})^{-1}$.
        \item[(v)] $\left({\mc{A}\m\mc{B}}\right)^{-1}=\mc{B}^{-1}\m 
\mc{A}^{-1}$.
     \end{enumerate}
\end{proposition}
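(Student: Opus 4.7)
The plan is to verify each of the five identities by reducing them to elementary properties of the mode-3 product and the face-wise product. The unifying strategy is to pass into the transform domain (apply $\times_3 M$), in which the $M$-product becomes the face-wise product and therefore reduces to slice-wise matrix arithmetic; once the identity to be proved is a face-wise matrix identity, it can be checked frontal slice by frontal slice.

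For part (i), apply $\times_3 M$ to the definition $\mc{A}\m\mc{B}=(\Tilde{\mc{A}}\Delta\Tilde{\mc{B}})\times_3 M^{-1}$; since the mode-3 product composes in its matrix argument and $M^{-1}M=I_p$, the left-hand side collapses to $\Tilde{\mc{A}}\Delta\Tilde{\mc{B}}$, while the second equality is the definition of the tilde. Part (iii) is immediate from the componentwise formula for $\times_3 M$, which is linear in its tensor argument. For part (ii), I would adopt the transform-domain convention that $\mc{A}^{*}$ is defined by the requirement $(\wtilde{\mc{A}^{*}})^{(k)}=((\Tilde{\mc{A}})^{(k)})^{*}$ for each frontal index $k$; under this convention both equalities in (ii) are slice-by-slice restatements of the definition, and this is precisely the choice that makes the $M$-product framework compatible with an adjoint operation.

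For part (iv), combine (i) with the fact that the $k$-th frontal slice of $\Tilde{\mc{A}}\Delta\Tilde{\mc{B}}$ is the ordinary matrix product $\Tilde{\mc{A}}^{(k)}\Tilde{\mc{B}}^{(k)}$; inverting each slice via $(XY)^{-1}=Y^{-1}X^{-1}$ and reassembling yields $(\Tilde{\mc{B}})^{-1}\Delta(\Tilde{\mc{A}})^{-1}$. For part (v), I would verify that $\mc{B}^{-1}\m\mc{A}^{-1}$ is a two-sided $M$-inverse of $\mc{A}\m\mc{B}$. Transforming the composition $(\mc{A}\m\mc{B})\m(\mc{B}^{-1}\m\mc{A}^{-1})$ via $\times_3 M$ and applying (i) twice reduces the claim to the slice-wise cancellation $\Tilde{\mc{A}}^{(k)}\Tilde{\mc{B}}^{(k)}(\Tilde{\mc{B}}^{(k)})^{-1}(\Tilde{\mc{A}}^{(k)})^{-1}=I$; transforming back by $\times_3 M^{-1}$ then produces the identity tensor, and the opposite composition is symmetric.

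The least automatic step is part (ii), where everything hinges on fixing a consistent definition of $\mc{A}^{*}$; once the transform-domain convention above is in place the identities become formal. The secondary technical point, needed for (v), is the identification $\Tilde{\mc{B}^{-1}}=(\Tilde{\mc{B}})^{-1}$ face-wise together with associativity of the $M$-product; I would justify both in passing by noting that associativity of matrix multiplication transports through the face-wise product and through the invertible mode-3 transformation, so the rearrangement of parentheses in the transform domain is legitimate.
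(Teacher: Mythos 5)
Your proposal is correct and follows exactly the route the paper intends: the paper offers no written proof, asserting the proposition "immediately follows" from the definitions, and your reduction of each identity to slice-wise matrix arithmetic in the transform domain (via $\times_3 M$, where $\m$ becomes $\Delta$) is that immediate argument spelled out. Your explicit flagging of part (ii) is apt — since the paper never defines $\mc{A}^{*}$, fixing it by $(\wtilde{\mc{A}^{*}})^{(k)}=((\Tilde{\mc{A}})^{(k)})^{*}$ (equivalently, assuming $M$ real so that conjugation commutes with $\times_3 M$) is exactly the convention needed for (ii) to hold, and the paper's examples all use real $M$.
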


Let $\Tilde{\mc{A}}^{(k)}=\Tilde{\mc{A}}(:,:,k)$ denotes the $k^{th}$ frontal slice of $\Tilde{\mc{A}}$ and the rank of  $k^{th}$-frontal slice $\Tilde{\mc{A}}^{(k)}$ be $r_k$. Then the ordered $p$-tuple $(r_1,r_2,\ldots,r_p)$, is known as the multirank of $\mc{A}$ \cite{soto23} and we denote it by $\mrk{\mc{A}}$.  For $\mc{A}\in\mb C^{m\times n\times p}$, we further denote $\mrk{\mc{A}}={\mathbf{r}}=(r,r,\ldots,r)$ and $\trk{\mc{A}}=\displaystyle\max_{1\leq i\leq p}\{r_i\}$, called the tubal rank of $\mc{A}$. If the frontal slices of a tensor $\Tilde{\mc{A}}$, are identity matrices, then the respective tensor $\mc{A}\in \mbc^{m\times m\times p}$ is called an identity tensor with respect $M$-product and is denoted by $\mc{I}_{mmp}$. Similarly, a  tensor $\mc{A}\in \mb{C}^{n \times n \times p}$ is invertible if all frontal slices of $\Tilde{\mc{A}}$ are invertible.  A third-order tensor $\mc{A}$ is called an upper triangular if the frontal slices of $\Tilde{\mc{A}}$ are upper triangular matrices. Similarly, we can define lower triangular tensors and diagonal tensors. In addition, $\mb{C}(x)^{m\times n\times p}$  denotes the set of tensors over the field of rational fractions $\mb{C}(x)$.  The range space of $\mc{A}$  is denoted by $\mathscr{R}(\mc{A})$ and the null space of $\mc{A}$ is denoted by $\mathscr{N}(\mc{A})$. 
Now we define:
 \begin{eqnarray*}
 \mathscr{R}(\mc{A})&=&\{\mc{A}\m\mc{X}\mid\mc{X}\in\Omega \}.~~\mathscr{N}(\mc{A})=\{\mc{X}\in\Omega \mid \mc{A}\m\mc{X}=\mc{O}\}. 
 \end{eqnarray*}

Next, we recall the following result, which was proven in \cite{sahoo2024computation}.
\begin{lemma} \cite[Proposition 2.10]{sahoo2024computation} \label{rgnl}
Let $M\in\mbc^{p\times p}$ and $\mc{A},~\mc{B} \in \mbc^{m\times n\times p}$. 
 Then $\rg(\mc{A})\subseteq\rg(\mc{B})$ if and only if $\mc{A}=\mc{B}\m\mc{T}$ for some $\mc{T}\in \mbc^{n\times n\times p}$. 
\end{lemma}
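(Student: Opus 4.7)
The plan is to prove both implications directly from the definition of $\rg$, relying on two structural facts about the $M$-product: its associativity and the existence of a right identity $\mc{I}_{nnp}$. Both facts follow immediately from Proposition~\ref{prop1.11}(i), which translates $\m$ into face-wise multiplication in the transformed ($\Tilde{\cdot}$) domain, where the corresponding matrix statements are standard.

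For the direction $(\Leftarrow)$, I would start by assuming $\mc{A}=\mc{B}\m\mc{T}$ for some $\mc{T}\in\mbc^{n\times n\times p}$. Then for any $\mc{X}\in\Omega$ I would compute
\[
\mc{A}\m\mc{X}=(\mc{B}\m\mc{T})\m\mc{X}=\mc{B}\m(\mc{T}\m\mc{X})\in\rg(\mc{B}),
\]
where associativity is justified by passing to the transformed slices via Proposition~\ref{prop1.11}(i) and applying slice-wise associativity of matrix multiplication. This gives $\rg(\mc{A})\subseteq\rg(\mc{B})$.

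For the direction $(\Rightarrow)$, the key observation is that $\mc{A}$ itself already lies in $\rg(\mc{A})$. Indeed, since the frontal slices of $\Tilde{\mc{I}_{nnp}}$ are identity matrices, face-wise multiplication by $\Tilde{\mc{I}_{nnp}}$ is trivial, so $\mc{A}\m\mc{I}_{nnp}=\mc{A}$, hence $\mc{A}\in\rg(\mc{A})$. Invoking the hypothesis $\rg(\mc{A})\subseteq\rg(\mc{B})$ then gives $\mc{A}\in\rg(\mc{B})$, which by the very definition of $\rg(\mc{B})$ produces a tensor $\mc{T}$ with $\mc{A}=\mc{B}\m\mc{T}$.

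The only mild subtlety, rather than a real obstacle, is to make sure the ambient set $\Omega$ in the definition of $\rg$ contains $\mc{I}_{nnp}$ and that the resulting $\mc{T}$ has the advertised shape $n\times n\times p$; this is automatic once $\Omega$ is read as $\mbc^{n\times n\times p}$, which is the natural convention consistent with the statement of the lemma. Everything else reduces to routine bookkeeping in the $\Tilde{\cdot}$ domain.
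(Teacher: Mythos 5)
The paper does not actually prove this lemma: it is imported verbatim from \cite[Proposition 2.10]{sahoo2024computation}, so there is no in-paper argument to compare against. Judged on its own, your proof is correct and is the standard one. The $(\Leftarrow)$ direction via associativity of $\m$ (justified slice-wise through Proposition~\ref{prop1.11}(i)) is fine, and the $(\Rightarrow)$ direction via $\mc{A}=\mc{A}\m\mc{I}_{nnp}\in\rg(\mc{A})\subseteq\rg(\mc{B})$ is the right idea. You correctly identify the only delicate point, namely what $\Omega$ is: the paper never pins it down. Under your reading $\Omega=\mbc^{n\times n\times p}$ everything closes up immediately; under the other natural reading, where $\rg(\mc{A})$ is a set of lateral slices $\mc{A}\m\mc{X}$ with $\mc{X}\in\mbc^{n\times 1\times p}$, the $(\Rightarrow)$ direction instead requires applying the hypothesis to each lateral slice $\mc{A}(:,j,:)=\mc{A}\m\mc{I}_{nnp}(:,j,:)$ to get $\mc{A}(:,j,:)=\mc{B}\m\mc{T}_j$ and then assembling $\mc{T}$ from the $\mc{T}_j$ --- a routine bookkeeping step worth one sentence, and the $(\Leftarrow)$ direction also needs $\mc{T}\m\mc{X}\in\Omega$, which holds in either convention. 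With that caveat made explicit, the argument is complete.
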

If $\mc{Z}$ satisfies $\mc{Z}*\mc{A}*\mc{Z} = \mc{Z}$ $\rg(\mc{X})=\rg(\mc{B})$, and  $\nl(\mc{X}) = \nl(\mc{C})$ then $\mc{Z}$ is called the outer inverse of $\mc{A}$  (with a given null and range space), and denoted by $\mc{A}^{{(2)}}_{\rg(\mc{B}),\nl(\mc{C})}$.  We now recall the definition of the Moore-Penrose inverse of tensors based on $M$-product 
\begin{definition}\cite{jin2023}\label{MPdef}
For any tensor $\mc{A} \in \mbc^{m\times n\times p}$  consider the following equations in $\mc{Z} \in \mbc^{n\times m\times p}:$
\begin{eqnarray*}
(1)~\mc{A}*\mc{Z}*\mc{A} = \mc{A},~
(2)~\mc{Z}*\mc{A}*\mc{Z} = \mc{Z},~
(3)~(\mc{A}*\mc{Z})^T = \mc{A}*\mc{Z},~
(4)~(\mc{Z}*\mc{A})^T = \mc{Z}*\mc{A}.
\end{eqnarray*}
Then $\mc{Z}$ is called the Moore-Penrose inverse of $\mc{A}$ if it satisfies all four conditions, which is denoted by $\mc{A}^{\dagger}.$
\end{definition}
  The multi-index and tubal index of a tensor are defined below.
\begin{definition}
The multi index of $\mc{A} \in \mbc^{m\times m\times p}$ induced by  $M\in\mbc^{p\times p}$ is denote by $\Ind_M(\mc{A})$ and is defined as
 $\Ind_M(\mc{A})=(k_1,\ldots,k_p)$ where $k_i=\ind(\tilde{\mc{A}}(:,:,i))$, $i=1,2,\ldots, p$.
 The standard notation $\ind(A)$ represents the minimal non-negative integer $k$ determining rank-invariant powers $\mathrm{rank}(A^{k})=\mathrm{rank}(A^{k+1})$.
 The tubal index $k$ is defined as $k:=\max_{1\leq i\leq p}k_i$.
 \end{definition}
\begin{definition}\cite{sahoo2024computation}\label{DRdef}
Let $M\in\mbc^{p\times p}$ and $\mc{A} \in \mbc^{m\times m\times p}$ with a tubal index $k$. If a tensor $Z$ satisfying 
The Drazin inverse $\mc{A}^D$ of $\mc{A}$ is defined as a unique tensor $\mc{Z}\in \mbc^{m\times m\times p}$ satisfying 
\[\mc{Z}\m\mc{A}\m\mc{Z}=\mc{Z}, ~\mc{A}\m\mc{Z}=\mc{Z}\m\mc{A} \mbox{ and } \mc{Z}\m\mc{A}^{k+1}=\mc{A}^k,\] the $\mc{Z}$ is called the Drazin inverse $\mc{A}$ and is denoted by $\mc{A}^D$.
\end{definition}

\section{Full-rank decomposition based on $M$-product }\label{Sec2}

\begin{definition}\label{def:frd}
    Let $M\in\mb C^{p\times p}$ and $\mc{A}\in\mb{C}^{m\times n\times p}$ with $\trk{\mc{A}}=r$. If there exist tensors $\mc{S}\in\mb{C}^{m\times r\times p}$ and $\mc{T}\in\mb{C}^{r\times n\times p}$ such that $\trk{\mc{S}}=r=\trk{\mc{T}}$ and $\mc{A}=\mc{S}\m\mc{T}$ then we call $\mc{A}=\mc{S}\m\mc{T}$ is a FRD of $\mc{A}$.
\end{definition}
The following algorithm is proposed to compute the FRD for any tensor $\mc{A}\in\mb{C}^{m\times n\times p}$.
\begin{algorithm}[H]
 \caption{FRD of $\mc{A}\in\mb{C}^{m\times n\times p}$} \label{alg:fr}
\begin{algorithmic}[1]
\State {\bf Input} $\mc{A}\in \mb{C}^{m\times n\times p}$ and $M\in\mb{R}^{p\times p}$
\State {\bf Compute } $\Tilde{\mc{A}}=\mc{A}\times_{3}M$ and $r=\trk{\mc{A}}$
\For{$i=1:p$}
\State $[F,G]=\mbox{FRD}(\Tilde{\mc{A}}(:,:,i))$
\State {\bf Compute} $r_1=\mathrm{rank}(\Tilde{\mc{A}}(:,:,i))$
\If{$r_1=r$}
\State $\Tilde{\mc{S}}(:,:,i)=F$ and $\Tilde{\mc{T}}(:,:,i)=G$
\Else
\State $\Tilde{\mc{S}}(:,:,i)=\begin{bmatrix}F&\mathbf{0}_{m,r-r_1}\end{bmatrix}$; \State $\Tilde{\mc{T}}(:,:,i)=\begin{bmatrix}G\\\mathbf{0}_{r-r_1,n}\end{bmatrix}$
\EndIf
\EndFor
\State {\bf Compute} $\mc{S}=\Tilde{\mc{S}}\times_{3}M^{-1}$ and $\mc{T}=\Tilde{\mc{T}}\times_{3}M^{-1}$
\State \Return $\mc{S},\mc{T}$.
 \end{algorithmic}
\end{algorithm}
\begin{example}\rm\label{exm:cntrfg}
    Let $\mc{A}\in\mb{C}^{2\times 3\times 2}$
with frontal slices
\[
\mc{A}(:,:,1) =
    \begin{pmatrix}
   0 & 1 &0\\
       0 & 1 &1
    \end{pmatrix},~
\mc{A}(:,:,2) =
    \begin{pmatrix}
      1 & 0 &1\\
     0 & 1 &1
    \end{pmatrix},~\textnormal{and}~
    {M} =
    \begin{pmatrix}
     1 &  -1\\
     0 & 1
    \end{pmatrix}.\]
We can compute  $\mathrm{rank}_{t}(\mc{A})=2=\max\{1,2\}$ and by Algorithm  \ref{alg:fr}, we obtain   $\mc{S}\in\mb{C}^{2\times 2\times 2}$ and $\mc{T}\in\mb{R}^{2\times 3\times 2}$ with the respective frontal slices are given by 
\[\mc{S}(:,:,1) =
    \begin{pmatrix}
   0.3855   & 0.7071\\
	   -0.9306 & -0.7071
    \end{pmatrix},~
\mc{S}(:,:,2) =
    \begin{pmatrix}
    -0.9306  &  0.7071\\
	   -0.9306  & -0.7071
    \end{pmatrix}, \]
 \[  \mc{T}(:,:,1) =
    \begin{pmatrix}
  -1.2971   & 0.2226 &  -1.8344\\
	    0.7071  & -0.7071  & -0.0000
    \end{pmatrix},~
\mc{T}(:,:,2) =
    \begin{pmatrix}
    -0.5373   &-0.5373  & -1.0746\\
	    0.7071  & -0.7071 &  -0.0000
    \end{pmatrix}.\]
  Moreover, $\mc{S}^{T}\m\mc{S}\in\mb{R}^{2\times 2\times 2}$ and $\mc{T}\m\mc{T}^{T}\in\mb{R}^{2\times 2\times 2}$ with respective frontal slices are
  \[\mc{S}^{T}\m\mc{S}(:,:,1)=\begin{pmatrix}
      3.4641    &     0\\
	         0   & 1
  \end{pmatrix}=\mc{T}\m\mc{T}^{T}(:,:,1),~\mc{S}^{T}\m\mc{S}(:,:,2)=\begin{pmatrix}
      1.7321   &     0\\
	         0   & 1
  \end{pmatrix}=\mc{T}\m\mc{T}^{T}(:,:,2).\]      
 Clearly both $\mc{S}^{T}\m\mc{S}$ and $\mc{T}\m\mc{T}^{T}$ are not invertible with respect to the matrix $M$ since 
 \[[(\mc{S}^{T}\m\mc{S})\times_3M](:,:,1)=\begin{pmatrix}
      1.7321      &     0\\
	         0   & 0
  \end{pmatrix}=[(\mc{T}\m\mc{T}^{T})\times_3M](:,:,1),\] which are not invertible.
\end{example}
\begin{remark}\label{rmk2.5}
In Example \ref{exm:cntrfg}, we observe that when we perform a FRD of a third-order tensor $\mathcal{A}=\mathcal{S} \times_{1} \mathcal{T}$, the matrices $\mathcal{S}^{T} \mathcal{S}$ and $\mathcal{T} \mathcal{T}^{T}$ are not invertible under the $M$-product. However, this invertibility property holds for the matrix FRD.
\end{remark}
In light of Remark \ref{rmk2.5}, the following question naturally arises:
\begin{itemize}
    \item \textbf{Question:} Is there any third-order tensor $\mc{A}$ 
    with FRD $\mc{A}=\mc{S}\m\mc{T}$ such that
$\mc{S}^{T} \m \mc{S}$ and $\mc{T} \m \mc{T}^{T}$ are invertible? 
\end{itemize}
Absolutely, the answer is positive, and we categorize such tensors in the following result.
\begin{lemma}\label{lma:invrt}
    Let $M\in\mb{C}^{p\times p}$ and $\mc{A}\in\mb{C}^{m\times n\times p}$ with $\mrk{\mc{A}}=\mathbf{r}$. If $\mc{A}=\mc{S}\m\mc{T}$ is a FRD of $\mc{A}$,  then $\mc{S}^{*} \m \mc{S}$ and $\mc{T} \m \mc{T}^{*}$ are invertible. 
\end{lemma}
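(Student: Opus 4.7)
The plan is to reduce the invertibility of $\mc{S}^{*}\m\mc{S}$ and $\mc{T}\m\mc{T}^{*}$ under the $M$-product to the invertibility of every frontal slice of their tilded versions, and then exploit the hypothesis $\mrk{\mc{A}}=\mathbf{r}$ to force each frontal slice of $\tilde{\mc{S}}$ to have full column rank $r$ and each frontal slice of $\tilde{\mc{T}}$ to have full row rank $r$.

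First I would use Proposition \ref{prop1.11}(i)--(ii) to observe that
\[
\widetilde{\mc{S}^{*}\m\mc{S}}^{(i)}=\bigl(\tilde{\mc{S}}^{(i)}\bigr)^{*}\tilde{\mc{S}}^{(i)},\qquad \widetilde{\mc{T}\m\mc{T}^{*}}^{(i)}=\tilde{\mc{T}}^{(i)}\bigl(\tilde{\mc{T}}^{(i)}\bigr)^{*},
\]
for every $i=1,\ldots,p$. Since a tensor is invertible with respect to the $M$-product precisely when all frontal slices of its tilded version are invertible matrices (as recalled in Section \ref{sec:pre}), it suffices to show that the $r\times r$ matrices $(\tilde{\mc{S}}^{(i)})^{*}\tilde{\mc{S}}^{(i)}$ and $\tilde{\mc{T}}^{(i)}(\tilde{\mc{T}}^{(i)})^{*}$ are invertible for each $i$.

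Next I would translate the rank hypothesis on $\mc{A}$ into a slice-wise rank statement. The assumption $\mrk{\mc{A}}=\mathbf{r}=(r,r,\ldots,r)$ means $\mathrm{rank}(\tilde{\mc{A}}^{(i)})=r$ for every $i$. From $\mc{A}=\mc{S}\m\mc{T}$ and Proposition \ref{prop1.11}(i), we have $\tilde{\mc{A}}^{(i)}=\tilde{\mc{S}}^{(i)}\tilde{\mc{T}}^{(i)}$, where $\tilde{\mc{S}}^{(i)}\in\mb{C}^{m\times r}$ and $\tilde{\mc{T}}^{(i)}\in\mb{C}^{r\times n}$. Because
\[
r=\mathrm{rank}\bigl(\tilde{\mc{S}}^{(i)}\tilde{\mc{T}}^{(i)}\bigr)\leq\min\bigl\{\mathrm{rank}(\tilde{\mc{S}}^{(i)}),\,\mathrm{rank}(\tilde{\mc{T}}^{(i)})\bigr\}\leq r,
\]
each factor must have rank exactly $r$. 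Thus $\tilde{\mc{S}}^{(i)}$ has full column rank and $\tilde{\mc{T}}^{(i)}$ has full row rank for every $i$.

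It is then classical that $(\tilde{\mc{S}}^{(i)})^{*}\tilde{\mc{S}}^{(i)}$ is an $r\times r$ Gram matrix of linearly independent columns, hence invertible, and similarly $\tilde{\mc{T}}^{(i)}(\tilde{\mc{T}}^{(i)})^{*}$ is invertible. Combining with the first step yields the invertibility of $\mc{S}^{*}\m\mc{S}$ and $\mc{T}\m\mc{T}^{*}$ in the $M$-product sense.

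The only genuine subtlety is the conceptual one already illustrated in Example \ref{exm:cntrfg}: the tubal-rank condition $\trk{\mc{A}}=r$ is not strong enough, because it only caps the maximum slice rank, whereas one needs every slice rank to equal $r$; this is precisely why the hypothesis of the lemma is phrased in terms of the multirank $\mrk{\mc{A}}=\mathbf{r}$ rather than the tubal rank. I would highlight this point to justify why the counterexample of Example \ref{exm:cntrfg} does not contradict the lemma.
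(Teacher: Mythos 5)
Your proof is correct and follows essentially the same route as the paper's: reduce to frontal slices via Proposition \ref{prop1.11}(i), deduce that each slice of $\tilde{\mc{S}}$ and $\tilde{\mc{T}}$ has rank exactly $r$, and invoke the invertibility of the resulting Gram matrices. The only difference is that you spell out the rank-of-a-product inequality to justify the slice ranks (a step the paper asserts without argument), which is a welcome but minor refinement.
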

\begin{proof}
Let   $\mrk{\mc{A}}=\mathbf{r}$. Then  $\mathrm{rank}(\Tilde{\mc{A}}(:,:,k))=r=\mathrm{rank}(\Tilde{\mc{S}}(:,:,k))=\mathrm{rank}(\Tilde{\mc{T}}(:,:,k))$ for each $k$ where $k=1,2,\ldots,p$. From $\mathrm{rank}(\Tilde{\mc{S}}^{
*}(:,:,k)\Tilde{\mc{S}}(:,:,k))=\mathrm{rank}(\Tilde{\mc{S}}(:,:,k))=r$, it follows that $\Tilde{\mc{S}}^{*}(:,:,k)\Tilde{\mc{S}}(:,:,k)$ is invertible for each $k$, $k=1,2,\ldots, p$. Thus by Proposition \ref{prop1.11} (i), $\wtilde{\mc{S}^{*} \m \mc{S}}(:,:,k)$ is invertible for each $k$.  Hence $\mc{S}^{*} \m \mc{S}$  is invertible. Similarly, we can show the invertibility of $\mc{T} \m \mc{T}^{T}$.
\end{proof}
\subsection{Computation of generalized inverses via FRD}
In this subsection, we discuss the computation of the Moore-Penrose inverse and, in a more general setting, the outer inverse via FRD.
\begin{theorem}
   Let $M\in\mb{C}^{p\times p}$ and $\mc{A}\in\mb{C}^{m\times n\times p}$ with $\mrk{\mc{A}}=\mathbf{r}$. If $\mc{A}=\mc{S}\m\mc{T}$ is a FRD of $\mc{A}$ then  $\mc{A}^{\dg}=\mc{T}^{*}\m(\mc{S}^{*}\m\mc{A}\m\mc{T}^{*})^{-1}\m\mc{S}^{*}$.
\end{theorem}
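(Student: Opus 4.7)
The plan is to mimic the classical matrix proof: if $A = SR$ is a matrix full-rank factorization with $S$ column-full-rank and $R$ row-full-rank, then $S^* A R^* = (S^*S)(RR^*)$ is invertible, and $A^\dagger = R^*(S^*AR^*)^{-1}S^*$ verifies the four Moore--Penrose equations. Since the $M$-product preserves the face-wise algebraic structure via Proposition \ref{prop1.11}, the same calculation should transcribe to the tensor setting, provided we first secure the needed invertibilities.

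First, I would check that the candidate expression is well-defined. By Lemma \ref{lma:invrt}, $\mc{S}^* \m \mc{S}$ and $\mc{T} \m \mc{T}^*$ are invertible under $\m$. Next, I would expand
\[
\mc{S}^* \m \mc{A} \m \mc{T}^* \;=\; \mc{S}^* \m \mc{S} \m \mc{T} \m \mc{T}^* \;=\; (\mc{S}^*\m\mc{S}) \m (\mc{T}\m\mc{T}^*),
\]
so by Proposition \ref{prop1.11}(v) the tensor $\mc{S}^* \m \mc{A} \m \mc{T}^*$ is invertible and
\[
(\mc{S}^* \m \mc{A} \m \mc{T}^*)^{-1} \;=\; (\mc{T}\m\mc{T}^*)^{-1} \m (\mc{S}^*\m\mc{S})^{-1}.
\]
Setting $\mc{Z} := \mc{T}^* \m (\mc{S}^* \m \mc{A} \m \mc{T}^*)^{-1} \m \mc{S}^*$, substitution of the above identity then collapses the products cleanly:
\[
\mc{A}\m\mc{Z} \;=\; \mc{S}\m(\mc{S}^*\m\mc{S})^{-1}\m\mc{S}^*, \qquad \mc{Z}\m\mc{A} \;=\; \mc{T}^*\m(\mc{T}\m\mc{T}^*)^{-1}\m\mc{T},
\]
where I use that $\mc{T}\m\mc{T}^*$ and its inverse commute face-wise and similarly for $\mc{S}^*\m\mc{S}$.

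From these two simplified forms, the four defining equations of Definition \ref{MPdef} are immediate. For equations (3) and (4), I would observe that both $\mc{A}\m\mc{Z}$ and $\mc{Z}\m\mc{A}$ are manifestly self-conjugate under $\m$: taking $(\cdot)^*$ reverses the product order, sends $(\mc{S}^*\m\mc{S})^{-1}$ to itself (it is Hermitian with Hermitian inverse, verifiable face-wise via Proposition \ref{prop1.11}(ii) and (iv)), and likewise for the $\mc{T}$-side. For equation (1), telescoping gives
\[
\mc{A}\m\mc{Z}\m\mc{A} \;=\; \mc{S}\m(\mc{S}^*\m\mc{S})^{-1}\m\mc{S}^*\m\mc{S}\m\mc{T} \;=\; \mc{S}\m\mc{T} \;=\; \mc{A},
\]
and an analogous cancellation yields $\mc{Z}\m\mc{A}\m\mc{Z}=\mc{Z}$ for equation (2).

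I expect the only real obstacle to be the bookkeeping needed to ensure that Hermitian and inverse operations interact correctly with the $M$-product. Everything ultimately reduces, via $(\cdot)\times_3 M$ and the face-wise product, to the corresponding facts for each frontal slice $\wtilde{\mc{A}}^{(k)}$, where the classical matrix identities apply. Thus the argument is essentially a translation lemma: once invertibility is in place (Lemma \ref{lma:invrt}) and the product/inverse/conjugate rules of Proposition \ref{prop1.11} are invoked, the four Moore--Penrose identities follow by direct computation with no further new ideas required.
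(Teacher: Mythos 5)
Your proposal is correct and follows essentially the same route as the paper's proof: invoke Lemma \ref{lma:invrt} together with Proposition \ref{prop1.11}(v) to get that $\mc{S}^{*}\m\mc{A}\m\mc{T}^{*}$ is invertible with $(\mc{S}^{*}\m\mc{A}\m\mc{T}^{*})^{-1}=(\mc{T}\m\mc{T}^{*})^{-1}\m(\mc{S}^{*}\m\mc{S})^{-1}$, then verify the four Penrose equations for $\mc{Z}=\mc{T}^*\m(\mc{T}\m\mc{T}^{*})^{-1}\m(\mc{S}^{*}\m\mc{S})^{-1}\m\mc{S}^*$ by direct cancellation. You actually supply more of the intermediate simplifications (the closed forms of $\mc{A}\m\mc{Z}$ and $\mc{Z}\m\mc{A}$ and the Hermitian check) than the paper, which compresses this into ``through simplification, we can verify.''
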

\begin{proof}
Let $\mrk{\mc{A}}=\mathbf{r}$ and $\mc{A}=\mc{S}\m\mc{T}$ is a FRD of $\mc{A}$. Then by Proposition \ref{prop1.11} (v) and Lemma \ref{lma:invrt}, we obtain $\mc{S}^{*}\m\mc{A}\m\mc{T}^{*}$ is invertible and 
\[(\mc{S}^{*}\m\mc{A}\m\mc{T}^{*})^{-1}=(\mc{T}\m\mc{T}^{*})^{-1}\m(\mc{S}^{*}\m\mc{S})^{-1}.\]
Let $\mc{Z}=\mc{T}^*\m(\mc{T}\m\mc{T}^{*})^{-1}\m(\mc{S}^{*}\m\mc{S})^{-1}\m\mc{S}^*$. Then, through simplification, we can verify the following:
\begin{center}
$\mc{A}\m\mc{Z}\m\mc{A}=\mc{A}$, $\mc{Z}\m\mc{A}\m\mc{Z}=\mc{Z}$, $(\mc{A}\m\mc{Z})^{T}=\mc{A}\m\mc{Z}$ and $(\mc{Z}\m\mc{A})^{T}=\mc{Z}\m\mc{A}$.     
\end{center}
\end{proof}
In the following Algorithm \ref{alg:frmpi}, we propose an alternative algorithm to compute the Moore-Penrose of a tensor when the ranks of the frontal slices of $\Tilde{\mc{A}}$ are different.
\begin{algorithm}[H]
 \caption{ Computation of the $\mc{A}^{\dagger}$ based on FRD} \label{alg:frmpi}
\begin{algorithmic}[1]
\State {\bf Input} $\mc{A}\in \mb{C}^{m\times n\times p}$ and $M\in\mb{C}^{p\times p}$; 
\State {\bf Compute } $\Tilde{\mc{A}}=\mc{A}\times_{3}M$;
\For{$i=1:p$}
\State $[F,G]=\mbox{FRD}(\Tilde{\mc{A}}(:,:,i))$;
\State $\Tilde{\mc{X}}(:,:,i)=G^{*}(F^{*} AG^{*})^{-1}F^{*}$;
\EndFor
\State $\mc{X}=\Tilde{\mc{X}}\times_{3}M^{-1}$;
\State \Return $\mc{A}^{\dg}=\mc{X}$.
 \end{algorithmic}
\end{algorithm}
The following numerical example is worked out to illustrate Algorithm  \ref{alg:frmpi} for computing the Moore-Penrose inverse.
\begin{example}\rm
  Let  $\mc{A}\in\mb{C}^{3\times 3\times 3}$ with frontal slices are  
\[\mc{A}(:,:,1)=\begin{pmatrix}
    0   &  1 &    0\\
	    -1  &   0   &  1\\
	    -1   &  0   &  1
\end{pmatrix},~\mc{A}(:,:,2)=\begin{pmatrix}
   1   &  1  &   1\\
	     1  &   1   &  2\\
	    -1   &  0   & -1
\end{pmatrix},~\mc{A}(:,:,3)=\begin{pmatrix}
    1    & 2  &   1\\
	     2  &   1  &   1\\
	     1   &  1  &   1
\end{pmatrix},\mbox{ and }M=\begin{pmatrix}
     1  &  -1 &    1\\
     0   &  1  &   1\\
     0   &  0   &  1
\end{pmatrix}.\]
Clearly $\mrk{\mc{A}}=(2,2,3)$. Using Algorithm \ref{alg:frmpi}, we obtain $\mc{X}=\mc{A}^{\dagger}$ with frontal slices are given by
\[\mc{X}(:,:,1)=\begin{pmatrix}
    -0.1553   &  -1.7632  &    1.9421\\
	   -1.1053   &  -0.2632  &    2.3421\\
	    1.7447   &   2.2368   &  -5.8579
\end{pmatrix},~\mc{X}(:,:,2)=\begin{pmatrix}
   -0.1053   &  -0.7632   &   0.8421\\
	   -0.6053   &  -0.2632  &    1.3421\\
	    0.8947   &   1.2368   &  -3.1579
\end{pmatrix},\]
and $\mc{X}(:,:,3)=\begin{pmatrix}
    0 & 1&  -1\\
    1&  0 & -1\\
    -1 & -1&  3
\end{pmatrix}$. We can further evaluate the errors associated with the Penrose equations as follows.
\[\|\mc{A}\m\mc{X}\m\mc{A}-\mc{A}\|_{F}=4.8074e^{-15},~~\|\mc{X}\m\mc{A}\m\mc{X}-\mc{X}\|_{F}=2.4401e^{-15},\]
\[\|\mc{A}\m\mc{X}-(\mc{A}\m\mc{X})^{T}\|_{F}=5.3314e^{-15},~~\|\mc{X}\m\mc{A}-(\mc{X}\m\mc{A})^{T}\|_{F}=2.0244e^{-15}.\]
\end{example}
\begin{remark}    
Algorithm \ref{alg:frmpi} is very useful to compute the Moore-Penrose inverse of a tensor when $\mrk{\mc{A}}=(r_{1},r_{2},\ldots, r_{p})$ with $r_i\neq r_j$ for some $i\neq j$. However, the drawback is that it  can not produce $\mc{S}$ and $\mc{T}$ explicitly due to incompatibility in all the frontal slices dimensions.
\end{remark}

\begin{lemma}\label{lma:rannul}
 Let $M\in\mb{C}^{p\times p}$, $\mc{S}\in\mb{C}^{m\times r\times p}$ and $\mc{T}\in\mb{C}^{r\times n\times p}$. If  $\mrk{\mc{S}}=\mathbf{r}=\mrk{\mc{T}}$, then $\mathscr{R}(\mc{S}\m\mc{T})=\mathscr{R}(\mc{S})$ and $\mathscr{N}(\mc{S}\m\mc{T})=\mathscr{N}(\mc{T})$.
\end{lemma}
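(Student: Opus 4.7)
The plan is to prove each set equality by a pair of containments, using Lemma \ref{rgnl} to reduce range questions to factorization statements, and using the invertibility results of Lemma \ref{lma:invrt} to invert the ``non-square" factors. First, I would observe that the hypothesis $\mrk{\mc{S}}=\mathbf{r}=\mrk{\mc{T}}$ means that every frontal slice $\Tilde{\mc{S}}(:,:,k)$ has full column rank $r$ and every $\Tilde{\mc{T}}(:,:,k)$ has full row rank $r$; hence the face-wise product has $\mathrm{rank}\bigl(\Tilde{\mc{S}}(:,:,k)\Tilde{\mc{T}}(:,:,k)\bigr)=r$ for every $k$. By Proposition \ref{prop1.11}(i), this shows $\mrk{\mc{S}\m\mc{T}}=\mathbf{r}$, so $\mc{S}\m\mc{T}$ satisfies the definition of an FRD and Lemma \ref{lma:invrt} applies, giving that $\mc{S}^{*}\m\mc{S}$ and $\mc{T}\m\mc{T}^{*}$ are invertible.

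Next I would establish $\mathscr{R}(\mc{S}\m\mc{T})=\mathscr{R}(\mc{S})$. The inclusion $\mathscr{R}(\mc{S}\m\mc{T})\subseteq\mathscr{R}(\mc{S})$ is immediate from Lemma \ref{rgnl}, since $\mc{S}\m\mc{T}=\mc{S}\m\mc{T}$ exhibits the required factorization. For the reverse inclusion, the key identity is
\[
\mc{S}=\mc{S}\m\mc{T}\m\bigl(\mc{T}^{*}\m(\mc{T}\m\mc{T}^{*})^{-1}\bigr),
\]
which is valid because $\mc{T}\m\mc{T}^{*}\m(\mc{T}\m\mc{T}^{*})^{-1}=\mc{I}$ by Proposition \ref{prop1.11}(v) together with the invertibility established above. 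Applying Lemma \ref{rgnl} again yields $\mathscr{R}(\mc{S})\subseteq\mathscr{R}(\mc{S}\m\mc{T})$.

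For the null-space statement, the inclusion $\mathscr{N}(\mc{T})\subseteq\mathscr{N}(\mc{S}\m\mc{T})$ is trivial from associativity of the $M$-product. For the reverse, suppose $\mc{X}\in\mathscr{N}(\mc{S}\m\mc{T})$, i.e.\ $\mc{S}\m\mc{T}\m\mc{X}=\mc{O}$. Multiplying on the left by $\mc{S}^{*}$ gives $(\mc{S}^{*}\m\mc{S})\m(\mc{T}\m\mc{X})=\mc{O}$; invertibility of $\mc{S}^{*}\m\mc{S}$ then forces $\mc{T}\m\mc{X}=\mc{O}$, so $\mc{X}\in\mathscr{N}(\mc{T})$.

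I do not anticipate a genuine obstacle here, since every step reduces to a routine use of the two prior lemmas and the associativity/identity properties of the $M$-product. The only subtlety worth verifying carefully is that the hypothesis $\mrk{\mc{S}}=\mathbf{r}=\mrk{\mc{T}}$ on the \emph{factors} is enough to invoke Lemma \ref{lma:invrt}, whose statement is phrased in terms of an FRD of a given tensor; the slice-wise rank argument in the first paragraph bridges this gap by showing that $\mc{S}\m\mc{T}$ is automatically an FRD of itself under these hypotheses.
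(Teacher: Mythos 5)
Your proof is correct, but it follows a genuinely different route from the paper's. The paper argues entirely slice-wise: it reduces the claim to showing $\mathscr{R}(\Tilde{\mc{S}}(:,:,k)\Tilde{\mc{T}}(:,:,k))=\mathscr{R}(\Tilde{\mc{S}}(:,:,k))$ for each $k$, and gets equality from the trivial containment plus the fact that both subspaces have dimension $r$ (a pure dimension count); the null-space half is dispatched "similarly." You instead work globally at the tensor level, first upgrading the hypothesis to invertibility of the Gram tensors $\mc{S}^{*}\m\mc{S}$ and $\mc{T}\m\mc{T}^{*}$ via Lemma \ref{lma:invrt}, and then exhibiting explicit one-sided inverses: $\mc{S}=(\mc{S}\m\mc{T})\m\mc{T}^{*}\m(\mc{T}\m\mc{T}^{*})^{-1}$ for the range, and left-cancellation of $\mc{S}^{*}\m\mc{S}$ for the null space. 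Your version buys a self-contained algebraic argument that never needs to assert that tensor-level range/null equality is equivalent to the slice-wise statements (a step the paper takes for granted with "it is sufficient to show"), and it makes the null-space direction explicit rather than leaving it to "similarly"; the cost is that it leans on Lemma \ref{lma:invrt}, whereas the paper's dimension count is shorter and uses only the rank hypothesis. Two small citation quibbles, neither a gap: Lemma \ref{rgnl} as stated requires $\mc{A}$ and $\mc{B}$ to have the same size, so the containment $\mathscr{R}(\mc{S}\m\mc{T})\subseteq\mathscr{R}(\mc{S})$ is better justified directly from the definition $\mathscr{R}(\mc{B})=\{\mc{B}\m\mc{X}\}$ together with associativity; and the identity $(\mc{T}\m\mc{T}^{*})\m(\mc{T}\m\mc{T}^{*})^{-1}=\mc{I}$ is just the definition of the inverse, not Proposition \ref{prop1.11}(v).
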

\begin{proof}
   Let $\Tilde{\mc{S}}=\mc{S}\times_3 M$ and $\Tilde{\mc{T}}=\mc{T}\times_3 M$.  To show $\mathscr{R}(\mc{S}\m\mc{T})=\mathscr{R}(\mc{S})$ it is sufficient to show that $\mathscr{R}(\Tilde{\mc{S}}(:,:,k)\Tilde{\mc{T}}(:,:,k))= \mathscr{R}(\Tilde{\mc{S}}(:,:,k))$ for $k=1,\ldots,p$. Since $\mathscr{R}(\Tilde{\mc{S}}(:,:,k)\Tilde{\mc{T}}(:,:,k))\subseteq \mathscr{R}(\Tilde{\mc{S}}(:,:,k))$, so the hypothesis $\textnormal{multi-rank}(\mc{S})=r$ implies $\mathscr{R}(\Tilde{\mc{S}}(:,:,k)\Tilde{\mc{T}}(:,:,k))= \mathscr{R}(\Tilde{\mc{S}}(:,:,k))$. Therefore, $\mathscr{R}(\mc{S}\m\mc{T})=\mathscr{R}(\mc{S})$. Similarly, it can be shown that $\mathscr{N}(\mc{S}\m\mc{T})=\mathscr{N}(\mc{T})$.
\end{proof}
Lemma \ref{lma:rannul} establishes the link between the FRD of a tensor with the null space and range space. 
\begin{corollary}\label{cor:rngnul}
    Let $M\in\mb{C}^{p\times p}$ and $\mc{A}\in\mb{C}^{m\times n\times p}$ with $\mrk{\mc{A}}=\mathbf{r}$. If $\mc{A}=\mc{S}\m\mc{T}$ is a FRD of $\mc{A}$, then $\mathscr{R}(\mc{A})=\mathscr{R}(\mc{S})$ and $\mathscr{N}(\mc{A})=\mathscr{N}(\mc{T})$. 
\end{corollary}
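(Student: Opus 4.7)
The plan is to obtain both equalities as an immediate specialization of Lemma \ref{lma:rannul}, once one checks that its hypothesis on the multi-ranks is met by the factors of the FRD. So the work reduces to a short rank-bookkeeping argument; there is no new calculation needed beyond invoking the lemma.

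First, I would translate the product identity $\mc{A}=\mc{S}\m\mc{T}$ into the frontal-slice picture via Proposition \ref{prop1.11}(i), which yields $\Tilde{\mc{A}}(:,:,k)=\Tilde{\mc{S}}(:,:,k)\,\Tilde{\mc{T}}(:,:,k)$ for each $k=1,\ldots,p$. The hypothesis $\mrk{\mc{A}}=\mathbf{r}$ forces every slice $\Tilde{\mc{A}}(:,:,k)$ to have rank exactly $r$, so by submultiplicativity of matrix rank both $\mathrm{rank}(\Tilde{\mc{S}}(:,:,k))\ge r$ and $\mathrm{rank}(\Tilde{\mc{T}}(:,:,k))\ge r$. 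Since $\Tilde{\mc{S}}(:,:,k)\in\mb{C}^{m\times r}$ and $\Tilde{\mc{T}}(:,:,k)\in\mb{C}^{r\times n}$, the reverse inequalities are automatic from the shapes alone. Consequently, every frontal slice of $\Tilde{\mc{S}}$ and every frontal slice of $\Tilde{\mc{T}}$ has rank exactly $r$, i.e.\ $\mrk{\mc{S}}=\mathbf{r}=\mrk{\mc{T}}$.

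With this multi-rank equality in hand, Lemma \ref{lma:rannul} applies verbatim to $\mc{S}\m\mc{T}$, giving $\mathscr{R}(\mc{S}\m\mc{T})=\mathscr{R}(\mc{S})$ and $\mathscr{N}(\mc{S}\m\mc{T})=\mathscr{N}(\mc{T})$. Substituting $\mc{A}=\mc{S}\m\mc{T}$ yields the desired $\mathscr{R}(\mc{A})=\mathscr{R}(\mc{S})$ and $\mathscr{N}(\mc{A})=\mathscr{N}(\mc{T})$.

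The only genuinely subtle point, and therefore the main obstacle to be careful about, is that Definition \ref{def:frd} only guarantees $\trk{\mc{S}}=\trk{\mc{T}}=r$, which is a maximum over the frontal slices and in general does \emph{not} imply that all slices attain that rank. The uniform assumption $\mrk{\mc{A}}=\mathbf{r}$ is precisely what upgrades this maximum to a slice-by-slice equality on the two factors, which is in turn the exact hypothesis needed to quote Lemma \ref{lma:rannul}. Once this upgrade is in place, no further computation is required.
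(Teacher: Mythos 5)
Your proposal is correct and follows exactly the route the paper intends: the corollary is stated as an immediate consequence of Lemma \ref{lma:rannul}, and you supply the one detail the paper leaves implicit, namely that $\mrk{\mc{A}}=\mathbf{r}$ together with the slice sizes $m\times r$ and $r\times n$ forces $\mrk{\mc{S}}=\mathbf{r}=\mrk{\mc{T}}$ via rank submultiplicativity. Your observation that the FRD definition alone only controls the tubal (maximum) rank of the factors is a worthwhile clarification, but the argument is the same as the paper's.
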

Using the definition of the Moore-Penrose \ref{MPdef}, Drazin inverse \ref{DRdef}, and the Lemma \ref{rgnl}, we can show the following result under $M$-product. 
\begin{theorem}\label{thm:dfrntginv}
Let  Let $M\in\mb{C}^{p\times p}$, $\mc{A}\in\mathbb{C}^{m\times n\times p}$ and $\mc{B}\in \mathbb{C}^{m\times m\times p}$ with tubal index $k$. Then 
    \begin{enumerate}
        \item[ (i)] $\mc{A}^{\dagger}=\mc{A}^{(2)}_{\displaystyle\rg(\mc{A}^*),\nl(\mc{A}^*)}$.
        \item[ (ii)] $\mc{B}^{D}=\mc{B}^{(2)}_{\displaystyle\rg(\mc{B}^k),\nl(\mc{B}^k)}$.
    \end{enumerate}
    \end{theorem}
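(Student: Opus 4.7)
The strategy is to verify the two outer inverse characterizations directly from the defining relations of $\mc{A}^{\dg}$ and $\mc{B}^{D}$, using Lemma \ref{rgnl} to convert factorization identities into range-space inclusions and exploiting associativity of $\m$ for the null-space inclusions. In both parts the outer property $\mc{Z}\m\mc{A}\m\mc{Z}=\mc{Z}$ is already built into the definitions (equation (2) of Definition \ref{MPdef} for $\mc{A}^{\dg}$ and the corresponding axiom in Definition \ref{DRdef} for $\mc{B}^{D}$), so the real work is to pin down the range and null spaces.

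For part (i), I would manufacture two pairs of factorizations. Starting from Penrose equation (2), $\mc{A}^{\dg}=\mc{A}^{\dg}\m\mc{A}\m\mc{A}^{\dg}$, and rewriting $\mc{A}^{\dg}\m\mc{A}=\mc{A}^{*}\m(\mc{A}^{\dg})^{*}$ by (4), I get $\mc{A}^{\dg}=\mc{A}^{*}\m(\mc{A}^{\dg})^{*}\m\mc{A}^{\dg}$, so Lemma \ref{rgnl} yields $\rg(\mc{A}^{\dg})\subseteq\rg(\mc{A}^{*})$. Dually, taking the conjugate transpose of (1) and using (4) produces $\mc{A}^{*}=\mc{A}^{\dg}\m\mc{A}\m\mc{A}^{*}$, which gives $\rg(\mc{A}^{*})\subseteq\rg(\mc{A}^{\dg})$. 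For the null spaces I would use the companion identities $\mc{A}^{\dg}=\mc{A}^{\dg}\m(\mc{A}^{\dg})^{*}\m\mc{A}^{*}$ (derived from (2) and (3)) and $\mc{A}^{*}=\mc{A}^{*}\m\mc{A}\m\mc{A}^{\dg}$ (from the conjugate transpose of (1) together with (3)); right-multiplying by an arbitrary $\mc{X}$ then gives $\nl(\mc{A}^{*})\subseteq\nl(\mc{A}^{\dg})$ and $\nl(\mc{A}^{\dg})\subseteq\nl(\mc{A}^{*})$ respectively.

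For part (ii) I would start from the Drazin axiom $\mc{B}^{D}\m\mc{B}^{k+1}=\mc{B}^{k}$ and use the commutativity $\mc{B}\m\mc{B}^{D}=\mc{B}^{D}\m\mc{B}$ to rewrite it as $\mc{B}^{k}=\mc{B}^{D}\m\mc{B}^{k+1}=\mc{B}^{k+1}\m\mc{B}^{D}$. Lemma \ref{rgnl} then immediately gives $\rg(\mc{B}^{k})\subseteq\rg(\mc{B}^{D})$, while right-multiplication by $\mc{X}$ gives $\nl(\mc{B}^{D})\subseteq\nl(\mc{B}^{k})$. For the reverse inclusions I would iterate $\mc{B}^{D}=(\mc{B}^{D})^{2}\m\mc{B}$ (a direct consequence of the outer property combined with commutativity) to obtain $\mc{B}^{D}=(\mc{B}^{D})^{k+1}\m\mc{B}^{k}=\mc{B}^{k}\m(\mc{B}^{D})^{k+1}$, from which Lemma \ref{rgnl} yields $\rg(\mc{B}^{D})\subseteq\rg(\mc{B}^{k})$, and right-multiplication yields $\nl(\mc{B}^{k})\subseteq\nl(\mc{B}^{D})$.

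The main obstacle is bookkeeping: Lemma \ref{rgnl} reads the \emph{left} factor in $\mc{A}=\mc{B}\m\mc{T}$ for range inclusions, so each identity must be arranged with the prescribed tensor on the correct side. This forces a deliberate choice between equations (3) and (4) at every step in part (i), and a careful application of the commutativity of $\mc{B}$ and $\mc{B}^{D}$ in part (ii) in order to move a desired factor from the left to the right. Once the correct identity is produced, the conjugate-transpose manipulations via Proposition \ref{prop1.11} and the null-space implications by associativity of $\m$ are routine.
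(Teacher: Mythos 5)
Your proof is correct and follows exactly the route the paper indicates: the paper states only that the result follows ``using the definition of the Moore--Penrose inverse, the Drazin inverse, and Lemma \ref{rgnl}'' and omits the details, and your argument is precisely that plan carried out, with the Penrose/Drazin axioms rearranged into factorizations of the form $\mc{X}=\mc{Y}\m\mc{T}$ so that Lemma \ref{rgnl} gives the range inclusions and right-multiplication gives the null-space inclusions. The bookkeeping you flag (choosing between equations (3) and (4), and using the commutativity $\mc{B}\m\mc{B}^{D}=\mc{B}^{D}\m\mc{B}$ to place factors on the correct side) is handled correctly in every step.
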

 The representation of an outer inverse based on a FRD is presented in the next result.
\begin{theorem}\label{eq:ats2}
    Let $M\in\mbc^{p\times p},~\mc{A}\in \mbc^{m\times n\times p}$ with $\mrk{\mc{A}}=\mathbf{r}$ and $\mc{W}\in \mbc^{n\times m\times p}$ such that $\mathscr{R}(\mc{W})=\mc{T}$ and $\mathscr{N}(\mc{W})=\mc{S}$, where $\tilde{\mc{T}}(:,1,k)$ is a subspace of $\mbc^{n}$ of dimension $l\leq r$, and $\Tilde{\mc{S}}(:,1,k)$ is a subspace of $\mbc^{m}$ of dimension $m-l$ for each $k=1,2,\ldots,p$. If $\mc{W}=\mc{S}\m\mc{T}$ is a FRD of $\mc{A}$ and $\mc{A}_{\mc{T}, \mc{S}}^{(2)}$ exists, then 
    \begin{itemize}
        \item[(i)] $\mc{T}\m\mc{A}\m\mc{S}$ is invertible.
        \item[(ii)] $\mc{A}_{\mc{T}. \mc{S}}^{(2)}=\mc{S}\m(\mc{T}\m\mc{A}\m\mc{S})^{-1}\m\mc{T}=\mc{A}^{(2)}_{\mathscr{R}(\mc{S}),\mathscr{N}(\mc{T})}$.
    \end{itemize}
\end{theorem}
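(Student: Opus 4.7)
My plan is to lift the classical matrix identity $A^{(2)}_{R(F),N(G)}=F(GAF)^{-1}G$ (valid whenever $FG$ is a full-rank factorization of a test matrix and the matrix outer inverse exists) to the $M$-product setting. The workhorses are Proposition \ref{prop1.11}(i), which certifies invertibility/equality/products under $\m$ by inspecting each transformed frontal slice; Lemma \ref{rgnl}, which converts range containment into a factorization; and Lemma \ref{lma:rannul}/Corollary \ref{cor:rngnul}, which identify $\mathscr{R}(\mc{W})=\mathscr{R}(\mc{S})$ and $\mathscr{N}(\mc{W})=\mathscr{N}(\mc{T})$ so that the subspaces $\mc{T},\mc{S}$ named in the hypothesis are exactly the range of $\mc{S}$ and the null space of $\mc{T}$ respectively.

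\textbf{Step (i): invertibility of $\mc{T}\m\mc{A}\m\mc{S}$.} By Proposition \ref{prop1.11}(i), the tensor $\mc{T}\m\mc{A}\m\mc{S}$ is invertible iff every frontal slice $\tilde{\mc{T}}^{(k)}\tilde{\mc{A}}^{(k)}\tilde{\mc{S}}^{(k)}$ is invertible. Using Corollary \ref{cor:rngnul} applied to $\mc{W}=\mc{S}\m\mc{T}$, the hypothesis that $\mc{A}^{(2)}_{\mc{T},\mc{S}}$ exists translates, slice by slice under the $M$-transform, into the existence of the matrix outer inverse $(\tilde{\mc{A}}^{(k)})^{(2)}_{R(\tilde{\mc{S}}^{(k)}),N(\tilde{\mc{T}}^{(k)})}$. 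The classical matrix characterization of this existence is precisely that $\tilde{\mc{T}}^{(k)}\tilde{\mc{A}}^{(k)}\tilde{\mc{S}}^{(k)}$ be invertible, which gives (i). The dimension hypotheses on $\tilde{\mc{T}}(:,1,k)$ and $\tilde{\mc{S}}(:,1,k)$ ensure the $l\times l$ slice sizes match so that the classical statement applies.

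\textbf{Step (ii): the explicit formula.} Write $\mc{C}:=\mc{T}\m\mc{A}\m\mc{S}$ and $\mc{Z}:=\mc{S}\m\mc{C}^{-1}\m\mc{T}$. Associativity of $\m$ gives a direct telescoping
\[
\mc{Z}\m\mc{A}\m\mc{Z}=\mc{S}\m\mc{C}^{-1}\m(\mc{T}\m\mc{A}\m\mc{S})\m\mc{C}^{-1}\m\mc{T}=\mc{S}\m\mc{C}^{-1}\m\mc{T}=\mc{Z}.
\]
For the range, Lemma \ref{rgnl} applied to $\mc{Z}=\mc{S}\m(\mc{C}^{-1}\m\mc{T})$ gives $\mathscr{R}(\mc{Z})\subseteq\mathscr{R}(\mc{S})$, while the identity $\mc{S}=\mc{S}\m\mc{C}^{-1}\m\mc{C}=\mc{Z}\m(\mc{A}\m\mc{S})$ supplies the reverse inclusion, so $\mathscr{R}(\mc{Z})=\mathscr{R}(\mc{S})=\mc{T}$. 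For the null space, if $\mc{Z}\m\mc{X}=\mc{O}$ then $\mc{S}\m(\mc{C}^{-1}\m\mc{T}\m\mc{X})=\mc{O}$; since each $\tilde{\mc{S}}^{(k)}$ has full column rank (multirank $\mathbf{r}$), slicewise left-cancellation together with Proposition \ref{prop1.11}(i) forces $\mc{C}^{-1}\m\mc{T}\m\mc{X}=\mc{O}$, and invertibility of $\mc{C}^{-1}$ gives $\mc{T}\m\mc{X}=\mc{O}$. The reverse inclusion is immediate from the form of $\mc{Z}$. Uniqueness of the outer inverse with prescribed range and null space identifies $\mc{Z}$ with $\mc{A}^{(2)}_{\mathscr{R}(\mc{S}),\mathscr{N}(\mc{T})}=\mc{A}^{(2)}_{\mc{T},\mc{S}}$, completing (ii).

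\textbf{Main obstacle.} Step (ii) is essentially bookkeeping once (i) is in hand; the real work is Step (i), where I must translate an intrinsic tensor hypothesis (existence of $\mc{A}^{(2)}_{\mc{T},\mc{S}}$) into a purely slicewise matrix condition and then invoke the classical matrix characterization. The delicate point is verifying that the range/null subspaces attached to $\mc{W}$ correspond, under the $M$-transform, to the column space of $\tilde{\mc{S}}^{(k)}$ and the null space of $\tilde{\mc{T}}^{(k)}$ on each slice, so that the slice-level outer inverses genuinely exist and the standard matrix theorem can fire; this is where the dimension conditions stated on $\tilde{\mc{T}}(:,1,k)$ and $\tilde{\mc{S}}(:,1,k)$ are indispensable.
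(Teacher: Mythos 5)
Your proposal follows essentially the same route as the paper: both reduce to the transformed frontal slices via Proposition \ref{prop1.11}(i), invoke the classical matrix characterization (the paper cites Theorem 3.1 of Sheng--Chen for this) to get slicewise invertibility of $\tilde{\mc{T}}^{(k)}\tilde{\mc{A}}^{(k)}\tilde{\mc{S}}^{(k)}$, and then verify the outer-inverse identity by the same telescoping computation followed by identification of the range and null space. Your step (ii) spells out the range/null-space verification in more detail than the paper, which simply cites Lemma \ref{lma:rannul}, but the argument is the same in substance and is correct.
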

\begin{proof}
(i) Let $\Tilde{\mc{A}}=\mc{A}\times_3 M$, $\Tilde{\mc{W}}=\mc{W}\times_3 M$, $\Tilde{\mc{S}}=\mc{S}\times_3 M$ and $\Tilde{\mc{T}}=\mc{T}\times_3 M$. By Corollary \ref{cor:rngnul}, we also have  $\mathscr{R}(\mc{S}) = \mc{T}$, $\mathscr{N}(\mc{T}) = \mc{S}$.  To show $\mc{T}\m\mc{A}\m\mc{S}$ is an invertible tensor, it is sufficient to show that $\Tilde{\mc{T}}(:,:,k)\Tilde{\mc{A}}(:,:,k)\Tilde{\mc{S}}(:,:,k)$ is invertible for all $k$, $k
=1,\ldots,p$. Since $\mathrm{rank}(\Tilde{\mc{A}}(:,:,k))=r$ and $\mc{W}=\mc{S}\m\mc{T}$ is FRD of $\mc{W}$, so by Proposition \ref{prop1.11} (i), we have  $\Tilde{\mc{W}}(:,:,k)=\Tilde{\mc{S}}(:,:,k)\Tilde{\mc{T}}(:,:,k)$ is a FRD for all $k$, $k
=1,\ldots,p$. Further, from $\mathscr{R}(\mc{W})=\mc{T}$ and $\nl(\mc{W})=\mc{S}$, we obtain $R(\Tilde{\mc{W}}(:,:,k))=\tilde{\mc{T}}(:,1,k)$ and $N(\Tilde{\mc{W}}(:,:,k))=\tilde{\mc{S}}(:,1,k)$. Hence by the Theorem 3.1 of \cite{sheng2007}, we get $\Tilde{\mc{T}}(:,:,k)\Tilde{\mc{A}}(:,:,k)\Tilde{\mc{S}}(:,:,k)$ is invertible for all $k$. \\    
(ii) Let $\mc{X}=\mc{S}\m(\mc{T}\m\mc{A}\m\mc{S})^{-1}\m\mc{T}$. Then 
    \begin{eqnarray*}
\mc{X}\m\mc{A}\m\mc{X}&=&\mc{S}\m(\mc{T}\m\mc{A}\m\mc{S})^{-1}\m\mc{T}A\mc{S}\m(\mc{T}\m\mc{A}\m\mc{S})^{-1}\m\mc{T}\\
&=&\mc{S}\m(\mc{T}\m\mc{A}\m\mc{S})^{-1}\m\mc{T}=\mc{X}.    
    \end{eqnarray*}
From Lemma \ref{lma:rannul} we get $\mathscr{R}(\mc{X}) = \mc{T}$, $\mathscr{N}(\mc{X}) = \mc{S}$ and hence completes the proof.  \qedhere
\end{proof}
In view of the Theorem \ref{thm:dfrntginv} and Theorem \ref{eq:ats2}, we obtain the following result as corollary. 
\begin{corollary}\label{cor:fr}
    Let $M\in\mb{C}^{p\times p}$, $\mc{A}\in\mathbb{C}^{m\times n\times p}$  with  $\mrk{\mc{A}}=\mathbf{r}$ and $\mc{B}\in \mathbb{C}^{m\times m\times p}$ with tubal index $k$. 
    \begin{enumerate}
        \item[(i)] If $\mc{A}^*=\mc{S}\m\mc{T}$ is a FRD of $\mc{A}^*$ then $\mc{A}^\dagger=\mc{S}\m(\mc{T}\m\mc{A}\m\mc{S})^{-1}\m\mc{T}$.
        \item[(ii)] If $\mc{B}^k=\mc{S}\m\mc{T}$ is a FRD of $\mc{B}^k$ then $\mc{B}^{D}=\mc{S}\m(\mc{T}\m\mc{B}\m\mc{S})^{-1}\m\mc{T}$.
    \end{enumerate}
\end{corollary}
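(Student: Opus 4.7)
The plan is to obtain both identities as direct corollaries of Theorem \ref{thm:dfrntginv} and Theorem \ref{eq:ats2}, using Corollary \ref{cor:rngnul} as the bridge that rewrites the prescribed range and null space in a form suitable for the outer-inverse formula.

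For part (i), I would start from Theorem \ref{thm:dfrntginv}(i), which identifies $\mc{A}^\dagger$ with the outer inverse $\mc{A}^{(2)}_{\rg(\mc{A}^*),\,\nl(\mc{A}^*)}$. Since $\mc{A}^* = \mc{S}\m\mc{T}$ is by hypothesis a FRD of $\mc{A}^*$, Corollary \ref{cor:rngnul} yields $\rg(\mc{A}^*)=\rg(\mc{S})$ and $\nl(\mc{A}^*)=\nl(\mc{T})$. Hence the prescribed range and null-space subspaces are exactly those generated by the FRD factors, and one can invoke Theorem \ref{eq:ats2} with $\mc{W} := \mc{A}^*$ to conclude both that $\mc{T}\m\mc{A}\m\mc{S}$ is invertible and that $\mc{A}^\dagger = \mc{S}\m(\mc{T}\m\mc{A}\m\mc{S})^{-1}\m\mc{T}$.

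Part (ii) follows by the same pattern, replacing $\mc{A}^*$ by $\mc{B}^k$ and using Theorem \ref{thm:dfrntginv}(ii) to identify $\mc{B}^D$ with $\mc{B}^{(2)}_{\rg(\mc{B}^k),\,\nl(\mc{B}^k)}$. Corollary \ref{cor:rngnul} applied to the FRD $\mc{B}^k=\mc{S}\m\mc{T}$ rewrites this as $\mc{B}^{(2)}_{\rg(\mc{S}),\,\nl(\mc{T})}$, and Theorem \ref{eq:ats2} then produces the stated closed form $\mc{B}^D = \mc{S}\m(\mc{T}\m\mc{B}\m\mc{S})^{-1}\m\mc{T}$.

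The only delicate point is verifying that the hypotheses of Theorem \ref{eq:ats2} are actually satisfied in both applications, namely the dimensional compatibility of the subspaces $\tilde{\mc T}(:,1,k)$ and $\tilde{\mc S}(:,1,k)$ and, crucially, the existence of the outer inverse $\mc A^{(2)}_{\mc T,\mc S}$. For (i) this existence is automatic because the Moore--Penrose inverse always exists, so Theorem \ref{thm:dfrntginv}(i) already guarantees it. For (ii) the tubal index condition on $\mc B$ ensures existence of $\mc B^D$, hence of the associated outer inverse via Theorem \ref{thm:dfrntginv}(ii). Thus the corollary reduces to a clean chain of substitutions with no new computation required.
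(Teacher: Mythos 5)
Your proposal is correct and follows exactly the route the paper intends: the paper derives this corollary in one line ``in view of Theorem \ref{thm:dfrntginv} and Theorem \ref{eq:ats2},'' and your argument simply makes explicit the substitution $\mc{W}=\mc{A}^*$ (resp.\ $\mc{W}=\mc{B}^k$), the identification of range and null space via Corollary \ref{cor:rngnul}, and the existence checks. No discrepancy with the paper's approach.
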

\begin{remark}\label{rmk2.12}
In corollary \ref{cor:fr}, if we assume $\mrk{\mc{A}}=(r_{1},r_{2},\ldots, r_{p})$ with $r_i\neq r_j$ for some $i\neq j$ and multi index for the tensor $\mc{B}$ ( while computing the Moore-Penrose inverse or Drazin inverse) still the result is true as presented in Algorithm \ref{alg:outInMPIDR}. 
\end{remark}
In support of the Remark \ref{rmk2.12}, the following algorithms  is developed to compute the outer inverse (specifically for the Moore-Penrose and Drazin inverse) of $\mc{A}$, when the rank or index of the frontal slices of $\Tilde{\mc{A}}$ are not necessarily the same. 
\begin{algorithm}[H]
 \caption{Computation of $\mc{A}^\dagger=\mc{A}_{\rg(\mc{A^*}), \nl(\mc{A^*})}^{(2)}$ or $\mc{B}^D=\mc{B}_{\rg(\mc{A^k}), \nl(\mc{A^k})}^{(2)}$} \label{alg:outInMPIDR}
\begin{algorithmic}[1]
\State {\bf Input} $M\in\mb{C}^{p\times p}$, $\mc{A}\in \mb{C}^{m\times n\times p}$ or $\mc{B}\in \mb{C}^{m\times m\times p}$
\State {\bf Compute } $\Tilde{\mc{A}}=\mc{A}\times_{3}M$ or $\Tilde{\mc{B}}=\mc{B}\times_{3}M$
\For{$k=1:p$}
\State {\bf Compute } $r=\mbox{rank}(\Tilde{\mc{A}}(:,:,k)$ or $l=\ind(\Tilde{\mc{B}}(:,:,k)$
\State $[F,G]=\mbox{FRD}(\Tilde{\mc{A}}(:,:,k))$ or $(\Tilde{\mc{B}}(:,:,k))^l$
\State $\Tilde{\mc{X}}(:,:,k)=F(G AF)^{-1}G$ or $\Tilde{\mc{Y}}(:,:,k)=F(G BF)^{-1}G$
\EndFor
\State $\mc{X}=\Tilde{\mc{X}}\times_{3}M^{-1}$ or $\mc{Y}=\Tilde{\mc{Y}}\times_{3}M^{-1}$
\State \Return $\mc{A}^{\dg}=\mc{X}$ or $\mc{A}^D=\mc{Y}$
 \end{algorithmic}
\end{algorithm}
The following examples are worked out to demonstrate the above Algorithm \ref{alg:outInMPIDR}.
\begin{example}\rm\label{drzexa}
    Let $\mc{A}\in\mbc^{3\times 3\times 3}$ with frontal slices are given by 
    \[\mc{A}(:,:,1)=\begin{pmatrix}
        1   &  0 &    0\\
	     3   &  3  & -1\\
	     0   &  0 &    0
    \end{pmatrix},~\mc{A}(:,:,2)=\begin{pmatrix}
        2   &  0 &    0\\
	     0    & 0   &  1\\
	     0   &  0  &   0
    \end{pmatrix},~\mc{A}(:,:,3)=\begin{pmatrix}
        0    & 1&     0\\
	     0    & 0   &  1\\
	     0   &  0  &   0
    \end{pmatrix},\mbox{ and }M=\begin{pmatrix}
        1  &   0&     1\\
     0   &  1   &  0\\
     0   &  0  &   1
    \end{pmatrix}.\]
 We can compute $\ind(\Tilde{\mc{A}}(:,:,1))=1$,~$\ind(\Tilde{\mc{A}}(:,:,2))=2$ and $\ind(\Tilde{\mc{A}}(:,:,3))=3$. Using Algorithm \ref{alg:outInMPIDR}, we evaluate $\mc{Y}:=\mc{A}^D$, where 
 \[\mc{Y}(:,:,1)=\begin{pmatrix}
        0.0625   & 0.0625   &      0\\
	    0.1875  &  0.1875  &       0\\
	         0      &   0 &        0
    \end{pmatrix},~\mc{Y}(:,:,2)=\begin{pmatrix}
        0.5   &  0 &    0\\
	     0    & 0   &  0\\
	     0   &  0  &   0
    \end{pmatrix},~\mc{Y}(:,:,3)=\begin{pmatrix}
        0    & 0&     0\\
	     0    & 0   &  0\\
	     0   &  0  &   0
    \end{pmatrix}.\]
    Also, we find the errors 
    \[\|\mc{Y}\m\mc{A}^k-\mc{A}^k\|_{F}=3.14e^{-16},~\|\mc{Y}\m\mc{A}\m\mc{Y}-\mc{Y}\|_{F}=1.55e^{-17},~\|\mc{Y}\m\mc{A}-\mc{A}\m\mc{Y}\|_{F}=1.35e^{-16}.\]
\end{example}
\begin{example}\rm
   Let $M$ be as defined in Example \ref{drzexa} and $\mc{A}\in\mbc^{2\times 4\times 3}$ with frontal slices are given by 
   \[\mc{A}(:,:,1)=\begin{pmatrix}
       -1     &  2   &    3   &    0\\
	    -1     &  1   &   -3   &    2\\
	     0     &  2   &    2   &   -2
    \end{pmatrix},~\mc{A}(:,:,2)=\begin{pmatrix}
        1     &  1   &    1    &   0\\
	     0    &   0   &   -1    &   0\\
	     1     &  1   &    0   &    0
    \end{pmatrix},~\mc{A}(:,:,3)=\begin{pmatrix}
      2    &  -2   &   -2   &    0\\
	     0   &   -1     &  2   &   -2\\
	     2    &  -2    &   0    &   2
    \end{pmatrix}.\]
     We can find that $\mrk{\mc{A}}=\{1,2,3\}$ and by applying Algorithm \ref{alg:outInMPIDR}, we have
     $\mc{Z}:=\mc{A}^\dagger$, where 
 \[\mc{Z}(:,:,1)=\begin{pmatrix}
  -\frac{11}{516}      &  -\frac{67}{516}     &     \frac{8}{129}  \\ 
	       \frac{5}{43}        &   \frac{7}{43}      &     \frac{5}{43}  \\   
	     \frac{187}{516}    &  -\frac{151}{516}  &       -\frac{7}{129} \\  
	      \frac{19}{86}        &    \frac{9}{43}        &  -\frac{12}{43}  
    \end{pmatrix},~\mc{Z}(:,:,2)=\begin{pmatrix}
       \frac{1}{6}     &       \frac{1}{6}     &       \frac{1}{3}   \\  
	      \frac{1}{6}          &  \frac{1}{6}     &      \frac{1}{3}   \\  
	      \frac{1}{3}         &  -\frac{2}{3}     &       -\frac{1}{3}  \\   
	       0        &      0       &       0       
    \end{pmatrix},~\mc{Z}(:,:,3)=\begin{pmatrix}
         \frac{9}{86}    &       \frac{2}{43}      &     \frac{9}{86}   \\
	      -\frac{5}{43}     &     -\frac{7}{43}        &  -\frac{5}{43}    \\
	     -\frac{12}{43}        &   \frac{9}{43}    &     \frac{19}{86}    \\
	     -\frac{19}{86}        &  -\frac{9}{43}    &      \frac{12}{43} 
    \end{pmatrix}.\]
    Also, we find the errors 
    \[\|\mc{A}\m\mc{Z}\m\mc{A}-\mc{A}\|_{F}=7.8505e^{-16},~~\|\mc{Z}\m\mc{A}\m\mc{Z}-\mc{Z}\|_{F}=2.2559e^{-16},\]
\[\|\mc{A}\m\mc{Z}-(\mc{A}\m\mc{Z})^{T}\|_{F}=3.6189e^{-16},~~\|\mc{Z}\m\mc{A}-(\mc{Z}\m\mc{A})^{T}\|_{F}=4.5097e^{-16}.\]
\end{example}
\section{$M$-$\mc{QDR}$ decomposition based on $M$-product}\label{sec:tqdr}
In this section we introduce the notion of $M$-$\mc{QDR}$ decomposition for third order tensors under $M$-product structure. The computation of outer inverses and specifically the Moore-Penrose and Drazin inverse based on  $M$-$\mc{QDR}$ decomposition are discussed.  First, we define $M$-$\mc{QDR}$ decomposition as follows.
\begin{definition}\label{def:tqdr}
    Let $\mc{A}\in \mbc(x)^{m\times n\times p}$ and $\mathrm{rank}_{t}(\mc{A})=r
    $. The decomposition 
    \begin{equation}\label{eq:qdr}
    \mc{A}=\mc{Q}\m\mc{D}\m\mc{R},
    \end{equation}
    is called the $M$-$\mc{QDR}$ decomposition of $A$ if $\mc{Q}\in \mbc(x)^{m\times r\times p}$, $\mc{D}\in \mbc(x)^{r\times r\times p}$ and $\mc{R}\in \mbc(x)^{r\times n\times p}$ are satisfies
    \begin{center}
 $\Tilde{\mc{Q}}(:,j,:)\neq \mathbf{0}$, $\Tilde{\mc{R}}(i,:,:)\neq \mathbf{0}$,  $\Tilde{\mc{D}}(i,:,:)\neq \mathbf{0}$  and  $\Tilde{\mc{D}}(:,j,:)\neq \mathbf{0}$ for $i=1,2,\ldots, r$ and $j=1,2,\ldots, r$.   
    \end{center}
\end{definition}
 Following this, we present an algorithm designed to compute $M$-$\mc{QDR}$ decomposition of $\mc{A}\in \mbc(x)^{m\times n\times p}$.
\begin{algorithm}[H]
 \caption{$M$-$\mc{QDR}$ decomposition of a tensor $\mc{A}$} \label{alg:qdr}
\begin{algorithmic}[1]
\State {\bf Input} $\mc{A}\in \mbc(x)^{m\times n\times p}, M\in\mbc(x)^{p\times p}$
\State {\bf Compute }  $\Tilde{\mc{A}}=\mc{A}\times_3 M$ and $r=\mathrm{rank_{t}}(\mc{A})$
 \For{$i=1:p$}
 \State {\bf Compute } $r_1=\mathrm{rank_{t}}(\Tilde{\mc{A}}(:,:,i))$;
\State $[Q,D,R]=QD\mathrm{R}(\Tilde{\mc{A}}(:,:,i))$; \Comment{Apply Algorithm 2.1 \cite{stan12}}

\If{$s< r$} 
    \State $\Tilde{\mc{Q}}(:,:,i)=\begin{bmatrix}Q&\mathbf{0}_{m,r-r_1}\end{bmatrix}$;
    \State $\Tilde{\mc{D}}(:,:,i)=\begin{bmatrix}D&\mathbf{0}_{r_1,r-r_1}\\
    \mathbf{0}_{r-r_1,r_1}&\mathbf{0}_{r-r_1,r-r_1}\end{bmatrix}$;
    \State $\Tilde{\mc{R}}(:,:,i)=\begin{bmatrix}R\\
    \mathbf{0}_{r-r_1,n}\end{bmatrix}$; 
\Else
\State $\Tilde{\mc{Q}}(:,:,i)=Q,\Tilde{\mc{D}}(:,:,i)=D,\Tilde{\mc{R}}(:,:,i)=R$;
\EndIf
\EndFor
\State $\mc{Q}=\Tilde{\mc{Q}}\times_3 M^{-1},\mc{D}=\Tilde{\mc{D}}\times_3 M^{-1},\mc{R}=\Tilde{\mc{R}}\times_3 M^{-1}$;
\State \Return $\mc{Q},\mc{D},\mc{R}$.
 \end{algorithmic}
\end{algorithm}
One numerical example is presented below to illustrate Algorithm \ref{alg:qdr} for a tensor over $\mbc$.
\begin{example}\rm
    Let $M$ be as defined in Example \ref{drzexa} and $\mc{A}\in\mbc^{3\times 4\times 3}$ with
\[\mc{A}(:,:,1)=\begin{pmatrix}
    4   &  2  &   -2  &   -1\\
	     4   &   2   &  -4  &    0\\
	     2   &   3  &   -2   &   1
\end{pmatrix},~\mc{A}(:,:,2)=\begin{pmatrix}
   1   &  -2   &   3  &    2\\
	     2  &   -2  &    4  &    2\\
	     2   &  -2  &    4  &    2
\end{pmatrix},~\mc{A}(:,:,3)=\begin{pmatrix}
   -2   &  -2   &   0   &  -1\\
	    -2  &   -2   &   2  &   -2\\
	    -1    & -3   &   1 &    -2
\end{pmatrix}.\]
Clear $\mathrm{rank}_{t}(\mc{A})=3$ since $\mrk{\mc{A}}=\{1,2,3\}$. Applying Algorithm \ref{alg:qdr}, we compute $\mc{Q},~\mc{D}$ and $\mc{R}$ with respective frontal slices are given by 
\[\mc{Q}(:,:,1)=\begin{pmatrix}
   4         &    -4/9       &     1  \\     
	       4      &       -4/9     &      -1  \\     
	       2         &    16/9    &        0  
\end{pmatrix},~\mc{Q}(:,:,2)=\begin{pmatrix}
  1          &   -8/9      &      0   \\
	       2         &     2/9    &        0     \\  
	       2     &         2/9   &         0
\end{pmatrix},~\mc{Q}(:,:,3)=\begin{pmatrix}
  -2          &    4/9   &        -1    \\   
	      -2       &       4/9   &         1 \\      
	      -1        &    -16/9     &       0   
\end{pmatrix},\]
\[\mc{D}(:,:,1)=\begin{pmatrix}
  0      &         0      &        0     \\  
	       0         &     -9/32     &       0  \\     
	       0         &      0        &      -1/2 
\end{pmatrix},~\mc{D}(:,:,2)=\begin{pmatrix}
  1/9        &     0    &           0   &    \\
	       0          &     9/8      &       0 \\      
	       0       &        0       &        0 
\end{pmatrix},~\mc{D}(:,:,3)=\begin{pmatrix}
 1/9          &   0       &        0  \\     
	       0           &    9/32     &       0   \\    
	       0            &   0    &           1/2    
\end{pmatrix},\]
\[\mc{R}(:,:,1)=\begin{pmatrix}
0         &    -11     &         -4     &        -17   \\    
0   & -\frac{32}{9}    & \frac{8}{9}    &   -\frac{20}{9}\\    
	       0         &      0      &        -2     &          1 
\end{pmatrix},~\mc{R}(:,:,2)=\begin{pmatrix}
  9         &    -10     &         19         &     10    \\   
	       0      &\frac{8}{9}   & -\frac{8}{9}   &-\frac{8}{9} \\   
	       0  &     0     &          0      &         0 
\end{pmatrix},~\mc{R}(:,:,3)=\begin{pmatrix}
9    &          11     &         -5     &          8     \\  
0      &    \frac{32}{9}& -\frac{8}{9}  & \frac{20}{9}  \\   
0          &     0      &         2        &      -1  
\end{pmatrix}.\]
Moreover, $\mrk{\mc{Q}}=\{1,2,3\}=\mrk{\mc{R}}$ and $\|\mc{A}-\mc{Q}\m\mc{D}\m\mc{R}\|_F=8.3081e-{16}$.
\end{example}
The Computation of outer inverses using $M$-$\mc{QDR}$ decomposition are discussed in the next result.

\begin{theorem}\label{thm:main}
   Let $\mc{M}\in \mbc(x)^{p\times p}$, $\mc{A}\in \mbc(x)^{m\times n\times p}$ and $\mc{W}\in \mbc(x)^{n\times m\times p}$, $\mrk{\mc{A}}=\mathbf{r}$ and $\mrk{\mc{W}}=\mathbf{s}$ with $s\leq r$. Consider 
    $\mc{W}=\mc{Q}\m\mc{D}\m\mc{R}$ be $M$-$\mc{QDR}$ decomposition of $\mc{W}$,  where $\mc{Q}\in\mb{R}^{m\times s\times p}$, $\mc{D}\in\mb{R}^{s\times s\times p}$ and $\mc{R}\in\mb{R}^{s\times n\times p}$. If $\mrk{\mc{R}\m\mc{A}\m\mc{Q}}=\mrk{\mc{W}}$ and $\mc{A}_{\mc{T}, \mc{S}}^{(2)}$ exists, 
then
\begin{equation}\label{eqqdr}
    \mc{A}_{\rg(\mc{W}), \nl(\mc{W})}^{(2)}=\mc{Q}\m(\mc{R}\m\mc{A}\m\mc{Q})^{-1}\m\mc{R} =\mc{A}_{\rg(\mc{Q}), \nl(\mc{R})}^{(2)}.
\end{equation}
\end{theorem}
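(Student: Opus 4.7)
The plan is to reduce the three-factor $M$-$\mc{QDR}$ decomposition to a two-factor FRD and then invoke Theorem \ref{eq:ats2}. First I would verify that, because $\mrk{\mc{W}}=\mathbf{s}$, no zero-padding is triggered in Algorithm \ref{alg:qdr} on any frontal slice; consequently $\mrk{\mc{Q}}=\mathbf{s}=\mrk{\mc{R}}$ and every $\tilde{\mc{D}}(:,:,k)$ is an $s\times s$ nonsingular diagonal matrix. By the definition of invertibility given in the preliminaries, this makes $\mc{D}$ invertible under the $M$-product. Setting $\mc{T}:=\mc{D}\m\mc{R}$ and using Proposition \ref{prop1.11}(i), each frontal slice $\tilde{\mc{T}}(:,:,k)=\tilde{\mc{D}}(:,:,k)\tilde{\mc{R}}(:,:,k)$ still has rank $s$, so $\mc{W}=\mc{Q}\m\mc{T}$ is a bona fide FRD of $\mc{W}$.

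Next, I would apply Corollary \ref{cor:rngnul} to this FRD to get $\rg(\mc{W})=\rg(\mc{Q})$ and $\nl(\mc{W})=\nl(\mc{T})$, the latter coinciding with $\nl(\mc{R})$ because $\mc{D}$ is invertible. Since the outer inverse $\mc{A}^{(2)}_{\rg(\mc{W}),\nl(\mc{W})}$ is assumed to exist, Theorem \ref{eq:ats2} (with the $\mc{S}$ role played by $\mc{Q}$ and the $\mc{T}$ role played by $\mc{D}\m\mc{R}$) yields invertibility of $(\mc{D}\m\mc{R})\m\mc{A}\m\mc{Q}$ together with the representation
\[
\mc{A}^{(2)}_{\rg(\mc{W}),\nl(\mc{W})}
=\mc{Q}\m\bigl((\mc{D}\m\mc{R})\m\mc{A}\m\mc{Q}\bigr)^{-1}\m(\mc{D}\m\mc{R}).
\]

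I would then simplify by peeling off $\mc{D}$ via Proposition \ref{prop1.11}(v):
\[
\bigl(\mc{D}\m(\mc{R}\m\mc{A}\m\mc{Q})\bigr)^{-1}=(\mc{R}\m\mc{A}\m\mc{Q})^{-1}\m\mc{D}^{-1},
\]
after which the inner $\mc{D}^{-1}\m\mc{D}$ cancels and the expression collapses to $\mc{Q}\m(\mc{R}\m\mc{A}\m\mc{Q})^{-1}\m\mc{R}$. The hypothesis $\mrk{\mc{R}\m\mc{A}\m\mc{Q}}=\mathbf{s}$ independently confirms that this middle factor is invertible (each of its $s\times s$ frontal slices has full rank). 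For the final identification with $\mc{A}^{(2)}_{\rg(\mc{Q}),\nl(\mc{R})}$, I would invoke the second equality of Theorem \ref{eq:ats2}(ii) applied to the FRD $\mc{W}=\mc{Q}\m\mc{T}$, combined with the already-established $\rg(\mc{Q})=\rg(\mc{W})$ and $\nl(\mc{R})=\nl(\mc{W})$.

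The main obstacle I anticipate is the careful justification that $\mc{D}$ is invertible under the $M$-product: the $M$-$\mc{QDR}$ decomposition is defined slice-by-slice in the transformed domain, so one must trace through Algorithm \ref{alg:qdr} and use the constant-multirank assumption $\mrk{\mc{W}}=\mathbf{s}$ to rule out the padding branch and thus guarantee nonsingular diagonal blocks. Once this structural point is established, the rest of the argument is essentially an associativity-and-cancellation calculation plus two appeals to Theorem \ref{eq:ats2}.
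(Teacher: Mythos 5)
Your proposal is correct and follows essentially the same route as the paper: reduce the three-factor decomposition to a two-factor FRD, apply Theorem \ref{eq:ats2}, and cancel the invertible $\mc{D}$ via Proposition \ref{prop1.11}(v); the only (immaterial) difference is that you group the FRD as $\mc{Q}\m(\mc{D}\m\mc{R})$ while the paper groups it as $(\mc{Q}\m\mc{D})\m\mc{R}$. Your explicit justification that the constant multirank $\mrk{\mc{W}}=\mathbf{s}$ rules out the zero-padding branch of Algorithm \ref{alg:qdr}, so that $\mc{D}$ is genuinely invertible, is a point the paper asserts without detail, and is a welcome addition.
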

\begin{proof}
Let $\mc{W}=\mc{Q}\m\mc{D}\m\mc{R}$ be the  $M$-$\mc{QDR}$ decomposition $\mc{W}$. Then by the Algorithm \ref{alg:qdr}, we have   $\Tilde{\mc{W}}(;,:,k)=\wtilde{\mc{Q}\m\mc{D}}(:,:,k)\Tilde{\mc{R}}(;,:,k)$ is FRD of $\Tilde{\mc{W}}(;,:,k)$ for all $k$, $k=1,2,\ldots p$. Thus the invertibility of $\mc{R}\m\mc{A}\m\mc{Q}\m\mc{D}$  is obtained by Theorem \ref{eq:ats2}. From the Algorithm \ref{alg:qdr}, it is easily seen that the tensor $\mc{D}$  is invertible. Hence  $(\mc{R}\m\mc{A}\m\mc{Q} \m\mc{D})^{-1}=\mc{D}^{-1}\m(\mc{R}\m\mc{A}\m\mc{Q})^{-1}$. Using Lemma  \ref{lma:rannul}, we can conclude that $\rg(\mc{W})=\rg(\mc{Q})$. Hence by Theorem \ref{eq:ats2}.
\[\mc{A}_{\rg(\mc{W}), \nl(\mc{W})}^{(2)}=\mc{Q}\m\mc{D}\m(\mc{R}\m\mc{A}\m\mc{Q}\m\mc{D})^{-1}\m\mc{R}=\mc{Q}\m(\mc{R}\m\mc{A}\m\mc{Q})^{-1}\m\mc{R} =\mc{A}_{\rg(\mc{Q}), \nl(\mc{R})}^{(2)}.\]
\end{proof}
\begin{remark}
 In the equation \eqref{eqqdr} of the Theorem \ref{thm:main}, we can replace the term $(\mc{R}\m\mc{A}\m\mc{Q})^{-1}\m\mc{R}$ by the solution of the multilinear system $(\mc{R}\m\mc{A}\m\mc{Q})\m\mc{Z}=\mc{R}$ since the computation of inverse is more computationally expensive. 
\end{remark}
\begin{remark}
  In Theorem \ref{thm:main}, if we assume $\mrk{\mc{A}}=(r_{1},r_{2},\ldots, r_{p})$ with $r_i\neq r_j$ for some $i\neq j$ and $\mrk{\mc{W}}=(s_{1},s_{2},\ldots, s_{p})$ with $s_i\neq s_j$ for some $i\neq j$ and $s_i\leq r_i$ for all $i=1,2,\ldots p$ then we can still compute the outer inverse without explicitly knowing the tensor $\mc{Q}$, $\mc{D}$ and $\mc{R}$.
\end{remark}
In view of the above two remarks, we have designed the following algorithm for computing outer inverses of third order tensors with multirank stucture. 
\begin{algorithm}[H]
 \caption{Computation of $\mc{A}_{\rg(\mc{W}), \nl(\mc{W})}^{(2)}$ based on $M$-$\mc{QDR}$ decomposition} \label{alg:outerM}
\begin{algorithmic}[1]
\State {\bf Input} $M\in\mbc^{p\times p}$, $\mc{A} \in \mbc(x)^{m \times n\times p}$
\State {\bf Compute} $\mrk{\mc{A}}=\{r_1,r_2,\ldots,r_p\}$ 
\State Choose $\mc{W} \in \mbc(x)^{n \times m\times p}$ such that  $\mrk{\mc{W}}=\{s_1,s_2,\ldots,s_p\}$ and $s_i\leq r_i$
\State {\bf Compute } $\Tilde{\mc{A}}=\mc{A}\times_{3}M$ and $\Tilde{\mc{W}}=\mc{W}\times_{3}M$
\For{$k=1:p$}
\State $[Q,D,R]=\mbox{QDR decomposition}(\Tilde{\mc{W}}(:,:,k))$\Comment{Apply Algorithm 2.1 \cite{stan12}}
\If{$\mbox{rank}(\Tilde{\mc{R}}(:,:,k)\Tilde{\mc{A}}(:,:,k)\Tilde{\mc{Q}}(:,:,k))=\Tilde{\mc{W}}(:,:,k)$} 
\State {\bf Solve} $(\Tilde{\mc{R}}(:,:,k)\Tilde{\mc{A}}(:,:,k)\Tilde{\mc{Q}}(:,:,k))X=\Tilde{\mc{R}}(:,:,k)$   
\EndIf
\State $\Tilde{\mc{Z}}(:,:,k)=\Tilde{\mc{Q}}(:,:,k)X$
\EndFor
\State {\bf Compute} $\mc{Z}=\Tilde{\mc{Z}}\times_{3}M^{-1}$
\State \Return $\mc{A}_{\rg(\mc{W}), \nl(\mc{W})}^{(2)}=\mc{Z}$.
 \end{algorithmic}
\end{algorithm}
From corollary \ref{cor:fr} and Theorem \ref{thm:main}, we obtain the following result as a corollary. 
\begin{corollary}\label{co3.66}
     Let $M\in\mb{C}^{p\times p}$, $\mc{A}\in\mathbb{C}^{m\times n\times p}$  with  $\mrk{\mc{A}}=\mathbf{r}$ and $\mc{B}\in \mathbb{C}^{m\times m\times p}$ with tubal index $k$. 
    \begin{enumerate}
        \item[(i)] If $\mc{A}^*=\mc{Q}\m\mc{D}\m\mc{R}$ is a $M$-$\mc{QDR}$ decomposition of $\mc{A}^*$ then $\mc{A}^\dagger=\mc{Q}\m(\mc{R}\m\mc{A}\m\mc{Q})^{-1}\m\mc{R}$.
        \item[(ii)] If $\mc{B}^k=\mc{Q}\m\mc{D}\m\mc{R}$ is a $M$-$\mc{QDR}$ decomposition of $\mc{B}^k$ then $\mc{B}^{D}=\mc{Q}\m(\mc{R}\m\mc{B}\m\mc{Q})^{-1}\m\mc{R}$.
    \end{enumerate}
\end{corollary}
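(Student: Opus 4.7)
The plan is to derive both identities as immediate consequences of Theorem \ref{thm:main} by choosing the auxiliary tensor $\mc{W}$ appropriately and then invoking Theorem \ref{thm:dfrntginv} to recognize the Moore-Penrose and Drazin inverses as specific outer inverses with prescribed range and null space.

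For part (i), I would set $\mc{W} := \mc{A}^{*}$. First, Theorem \ref{thm:dfrntginv}(i) allows the rewriting $\mc{A}^{\dagger} = \mc{A}^{(2)}_{\rg(\mc{A}^{*}),\nl(\mc{A}^{*})} = \mc{A}^{(2)}_{\rg(\mc{W}),\nl(\mc{W})}$. Next, I would verify the hypotheses of Theorem \ref{thm:main}: by Proposition \ref{prop1.11}(ii), $\Tilde{\mc{A}^{*}}(:,:,k) = (\Tilde{\mc{A}}(:,:,k))^{*}$, so conjugate transposition preserves the frontal-slice ranks, giving $\mrk{\mc{W}} = \mrk{\mc{A}^{*}} = \mathbf{r}$. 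The existence of $\mc{A}^{(2)}_{\rg(\mc{W}),\nl(\mc{W})}$ is automatic since the Moore-Penrose inverse always exists. Once the hypothesis $\mrk{\mc{R}\m\mc{A}\m\mc{Q}} = \mrk{\mc{W}}$ is checked (this follows from Theorem \ref{eq:ats2}(i) applied slice-wise, which guarantees invertibility of $\Tilde{\mc{R}}(:,:,k)\Tilde{\mc{A}}(:,:,k)\Tilde{\mc{Q}}(:,:,k)$ for every $k$), Theorem \ref{thm:main} directly gives $\mc{A}^{\dagger} = \mc{Q}\m(\mc{R}\m\mc{A}\m\mc{Q})^{-1}\m\mc{R}$.

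Part (ii) proceeds symmetrically with the choice $\mc{W} := \mc{B}^{k}$. Theorem \ref{thm:dfrntginv}(ii) yields $\mc{B}^{D} = \mc{B}^{(2)}_{\rg(\mc{B}^{k}),\nl(\mc{B}^{k})}$. The rank stability inherent in the definition of the tubal index $k$ ensures that $\mrk{\mc{B}^{k}}$ is well defined and equals $\mrk{\mc{B}^{k+1}}$, which supplies the multirank compatibility needed for the hypothesis $\mrk{\mc{R}\m\mc{B}\m\mc{Q}} = \mrk{\mc{W}}$ of Theorem \ref{thm:main}; the existence of $\mc{B}^{D}$ is classical. Applying Theorem \ref{thm:main} with $\mc{A}$ replaced by $\mc{B}$ and $\mc{W} = \mc{B}^{k}$ then yields $\mc{B}^{D} = \mc{Q}\m(\mc{R}\m\mc{B}\m\mc{Q})^{-1}\m\mc{R}$.

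The only real obstacle is the verification of the rank/invertibility condition $\mrk{\mc{R}\m\mc{A}\m\mc{Q}} = \mrk{\mc{W}}$ in each case, since the remainder is essentially an application of Theorem \ref{thm:main}. However, the choices $\mc{W} = \mc{A}^{*}$ and $\mc{W} = \mc{B}^{k}$ are precisely those for which the classical matrix-level theory of Moore-Penrose and Drazin inverses guarantees the range/null-space compatibility needed for $\Tilde{\mc{R}}(:,:,k)\Tilde{\mc{A}}(:,:,k)\Tilde{\mc{Q}}(:,:,k)$ (resp.\ $\Tilde{\mc{R}}(:,:,k)\Tilde{\mc{B}}(:,:,k)\Tilde{\mc{Q}}(:,:,k)$) to be invertible, so this reduces to a frontal-slice-wise invocation of the matrix analogue already used to prove Theorem \ref{eq:ats2}(i).
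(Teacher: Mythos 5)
Your proof is correct and takes essentially the same route as the paper: the authors obtain this corollary by combining the identifications $\mc{A}^{\dagger}=\mc{A}^{(2)}_{\rg(\mc{A}^{*}),\nl(\mc{A}^{*})}$ and $\mc{B}^{D}=\mc{B}^{(2)}_{\rg(\mc{B}^{k}),\nl(\mc{B}^{k})}$ from Theorem \ref{thm:dfrntginv} with the $M$-$\mc{QDR}$ representation of outer inverses in Theorem \ref{thm:main}, exactly your choices $\mc{W}=\mc{A}^{*}$ and $\mc{W}=\mc{B}^{k}$. The paper gives no further detail, so your verification of the multirank and invertibility hypotheses merely fills in what the authors leave implicit.
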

\begin{remark}
    The Corollary \ref{co3.66} can be applied for multirank and multi index as explained for FRD in Remark \ref{rmk2.12} and Algorithm \ref{alg:outInMPIDR}. Algorithm \ref{alg:outerM} can be directly used for computing the Moore-Penrose and Drazin inverse when the tensor has multirank or multi index.
\end{remark}

\subsection{Numerical Examples}\label{sec:ne}
The notations used to denote errors associated with different matrix and tensor equations in this section.
\begin{table}[H]
    \centering
    \caption{Errors corresponding with matrix and tensor equations}
    \vspace*{0.2cm}
    \renewcommand{\arraystretch}{1.2}
    \begin{tabular}{lll}
    \hline
     $\mc{E}^M_{1} = \|\mc{A}-\mc{A}\m\mc{X}\m\mc{A}\|_F$  & $\mc{E}^M_{2} = \|\mc{X}-\mc{X}\m\mc{A}\m\mc{X}\|_F$ & $\mc{E}^M_{3} =\|\mc{A}\m\mc{X}-(\mc{A}\m\mc{X})^T\|_F$ \\     
    $\mc{E}^M_{4} =\|\mc{X}\m\mc{A}-(\mc{X}\m\mc{A})^T\|_F$&$\mc{E}^M_{5} = \|\mc{A}\m\mc{X}-\mc{X}*\mc{A}\|_F$
       &$\mc{E}_{1^k} = \|\mc{X}*\mc{A}^{k} -\mc{A}^k\|_F$ \\      
       ${E}_{1} = \|{A}-{A}{X}{A}\|_F$  &${E}_{2} = \|{X}-{X}{A}{X}\|_F$&${E}_{3} =\|{A}{X}-({A}{X})^T\|_F$ \\
       ${E}_{4} =\|{X}{A}-({X}{A})^T\|_F$&${E}_{5} = \|{A}{X}-XA\|_F$
       &${E}_{1^k} = \|XA^{k+1} -{A}^k\|_F$ \\
     \hline
    \end{tabular}
       \label{tab:error}
\end{table}
\begin{example}\rm\label{exa3.8}
  Consider $M\in\mbc^{100\times 100}$ be any random matrix and $\mc{A}\in \mbc^{n\times n\times 100}$ where the frontal slices are chosen the default MATLAB test matrix $\mathbf{cycol}$ (gallery(`cycol',n)) of order $n$. The comparison analysis of errors  and mean CPT time (MT$^{\tiny{\mbox{ten}}}$ used for tensor computations and MT$^{\tiny{mat}}$ is used for matrix computations) for computing the Moore-Penrose inverse with keeping the same number of elements for both tensors and matrices of different sizes, are presented in Table \ref{tab:MPIerror-tenm-mat} and Figure \ref{fig:mpcomp} (a).
\end{example}
\begin{table}[H]
    \begin{center}
          \caption{Comparison of errors and computational time for computing the Moore-Penrose inverse tensors and matrices by using QDR decomposition for Example \ref{exa3.8}}
         \vspace*{0.2cm}
         \renewcommand{\arraystretch}{1.2}
    \begin{tabular}{cccccc}
    \hline
        Size of $\mc{A}$ & Size of $A$ & MT$^{\tiny{\mbox{ten}}}$  & MT$^{\tiny{mat}}$ & Error$^{\mbox{\scriptsize ten}}$ &Error$^{\mbox{\scriptsize mat}}$\\ 
           \hline
    \multirow{4}{*}{\small $200\times 200\times 100$} &   \multirow{4}{*}{\small $2000\times 2000$} &\multirow{4}{*}{2.02} &\multirow{4}{*}{15.47}& 
    $\mc{E}^{\mbox{\scriptsize M}}_1=5.95e^{-11}$ & $E_1=3.61e^{-13}$
    \\ 
& &  & & $\mc{E}^{\mbox{\scriptsize M}}_{2}=2.51e^{-14}$ & $E_{2}=5.9e^{-17}$
    \\  
   
    & &  & & $\mc{E}^{\mbox{\scriptsize M}}_{3}=8.49e^{-12}$ & $E_{3}=4.52e^{-15}$
    \\  
    & &  & & $\mc{E}^{\mbox{\scriptsize M}}_{4}=2.81e^{-12}$ & $E_{4}=5.27e^{-15}$
\\
 \hline
    \multirow{4}{*}{\small $300\times 300\times 100$} &   \multirow{4}{*}{\small $3000\times 3000$} &\multirow{4}{*}{5.14} &\multirow{4}{*}{62.71}& 
     $\mc{E}^{\mbox{\scriptsize M}}_1=6.18e^{-11}$ & $E_1=3.83e^{-13}$
    \\ 
& &  & & $\mc{E}^{\mbox{\scriptsize M}}_{2}=1.81e^{-14}$ & $E_{2}=3.99e^{-17}$
    \\  
   
    & &  & & $\mc{E}^{\mbox{\scriptsize M}}_{3}=6.49e^{-12}$ & $E_{3}=4.24e^{-15}$
    \\  
    & &  & & $\mc{E}^{\mbox{\scriptsize M}}_{4}=2.28e^{-12}$ & $E_{4}=4.71e^{-15}$
\\
\hline
\multirow{4}{*}{\small $400\times 400\times 100$} &   \multirow{4}{*}{\small $4000\times 4000$} &\multirow{4}{*}{9.86} &\multirow{4}{*}{155.32}& 
  $\mc{E}^{\mbox{\scriptsize M}}_1=5.21e^{-11}$ & $E_1=5.11e^{-13}$
    \\ 
& &  & & $\mc{E}^{\mbox{\scriptsize M}}_{2}=1.10e^{-13}$ & $E_{2}=4.01e^{-17}$
    \\  
   
    & &  & & $\mc{E}^{\mbox{\scriptsize M}}_{3}=5.51e^{-11}$ & $E_{3}=4.77e^{-15}$
    \\  
    & &  & & $\mc{E}^{\mbox{\scriptsize M}}_{4}=1.68e^{-11}$ & $E_{4}=5.35e^{-15}$
\\
\hline
    \end{tabular}
       \label{tab:MPIerror-tenm-mat}  
    \end{center}
\end{table}

\begin{example}\rm\label{exa3.9}
  Consider $M\in\mbc^{100\times 100}$ be any random matrix and $\mc{A}\in \mbc^{n\times n\times 100}$ where the frontal slices are chosen the default matlab test matrix $\mathbf{chow}$ (gallery(`chow',n)) of order $n$. The similar comparison analysis like example \ref{exa3.8}, we have presented in Table \ref{tab:MPIerror-tenm-mat-chow} and Figure \ref{fig:mpcomp} (b).
\end{example}
\begin{table}[H]
    \begin{center}
          \caption{Comparison of errors and computational time for computing the Moore-Penrose inverse tensors and matrices by using QDR decomposition for Example \ref{exa3.9}}
         \vspace*{0.2cm}
         \renewcommand{\arraystretch}{1.2}
    \begin{tabular}{cccccc}
    \hline
        Size of $\mc{A}$ & Size of $A$ & MT$^{\tiny{\mbox{ten}}}$  & MT$^{\tiny{mat}}$ & Error$^{\mbox{\scriptsize ten}}$ &Error$^{\mbox{\scriptsize mat}}$\\ 
 \hline
    \multirow{4}{*}{\small $300\times 300\times 100$} &   \multirow{4}{*}{\small $3000\times 3000$} &\multirow{4}{*}{28.10} &\multirow{4}{*}{414.29}& 
     $\mc{E}^{\mbox{\scriptsize M}}_1=2.42e^{-08}$ & $E_1=6.45e^{-09}$
    \\ 
& &  & & $\mc{E}^{\mbox{\scriptsize M}}_{2}=1.25e^{-10}$ & $E_{2}=4.52e^{-10}$
    \\  
   
    & &  & & $\mc{E}^{\mbox{\scriptsize M}}_{3}=3.909e^{-10}$ & $E_{3}=3.34e^{-10}$
    \\  
    & &  & & $\mc{E}^{\mbox{\scriptsize M}}_{4}=6.41e^{-10}$ & $E_{4}=4.05e^{-10}$
\\
\hline
\multirow{4}{*}{\small $400\times 400\times 100$} &   \multirow{4}{*}{\small $4000\times 4000$} &\multirow{4}{*}{66.98} &\multirow{4}{*}{1019.62}& 
  $\mc{E}^{\mbox{\scriptsize M}}_1=6.79e^{-07}$ & $E_1=3.13e^{-08}$
    \\ 
& &  & & $\mc{E}^{\mbox{\scriptsize M}}_{2}=4.60e^{-11}$ & $E_{2}=6.50e^{-10}$
    \\  
   
    & &  & & $\mc{E}^{\mbox{\scriptsize M}}_{3}=1.54e^{-10}$ & $E_{3}=4.87e^{-10}$
    \\  
    & &  & & $\mc{E}^{\mbox{\scriptsize M}}_{4}=2.25e^{-10}$ & $E_{4}=6.94e^{-10}$
\\
\hline
    \end{tabular}
       \label{tab:MPIerror-tenm-mat-chow}  
    \end{center}
\end{table}

\begin{figure}[h!]
\begin{center}
\subfigure[Slices of tensors and matrices chosen as matlab \hspace*{0.65cm}test matrix $\mathbf{cycol}$]
{\includegraphics[scale=0.42]{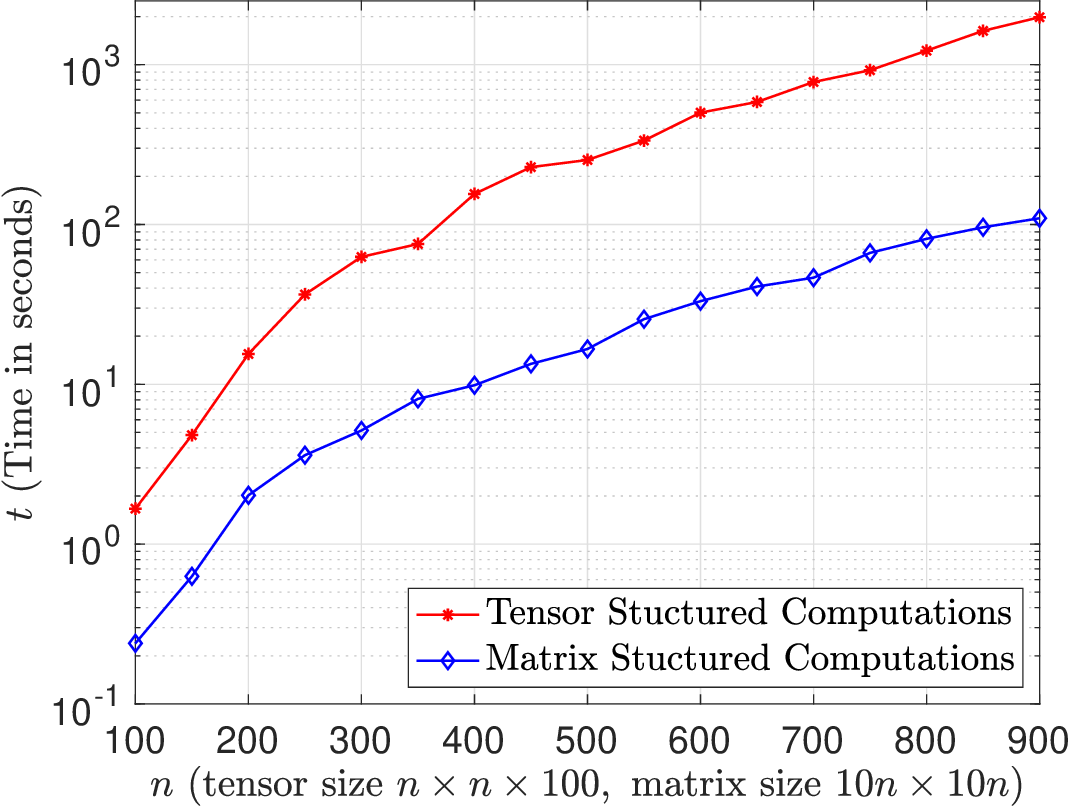}}
~~~~~\subfigure[Slices of tensors and matrices chosen as matlab \hspace*{0.65cm}test matrix $\mathbf{chow}$]{\includegraphics[scale=0.42]{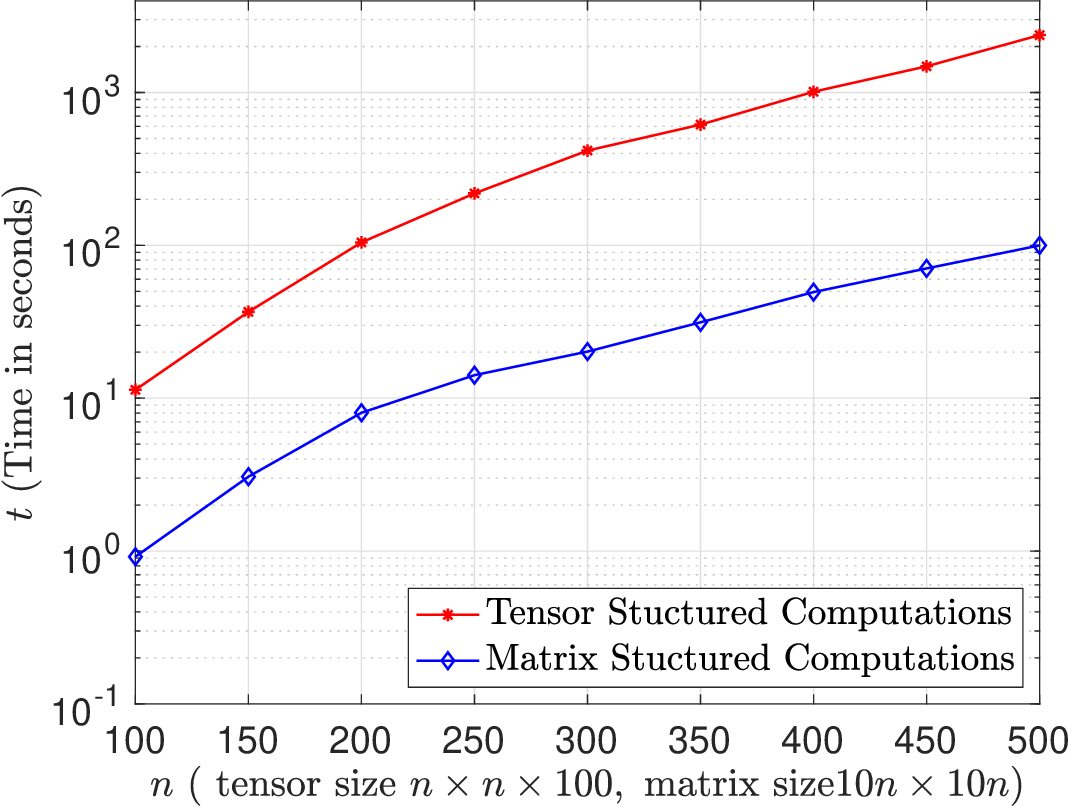}}
\caption{Comparison of mean CPU time for computing the Moore-Penrose inverse of tensors and matrices}
\label{fig:mpcomp}
\vspace{0.5cm}
\subfigure[Slices of tensors and matrices chosen as matlab \hspace*{0.65cm}test matrix $\mathbf{cycol}$]
{\includegraphics[scale=0.42]{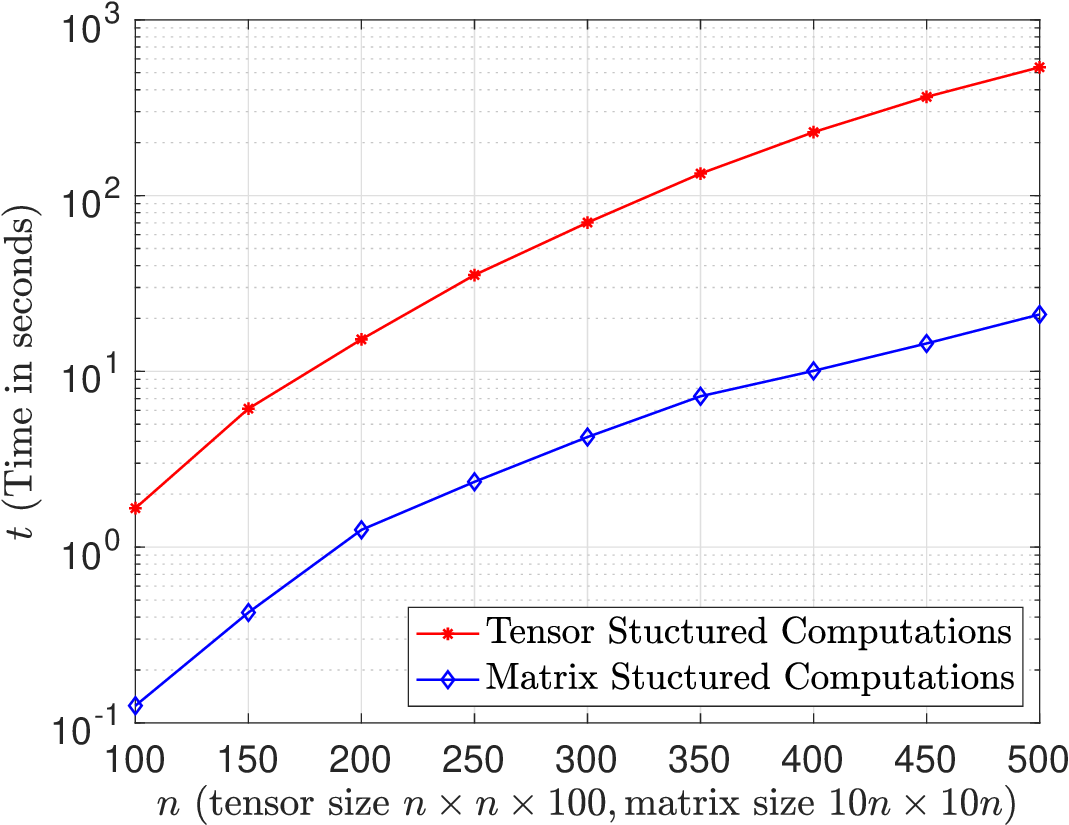}}
~~~~~\subfigure[Slices of tensors and matrices chosen as matlab \hspace*{0.65cm}test matrix $\mathbf{gearmat}$]{\includegraphics[scale=0.42]{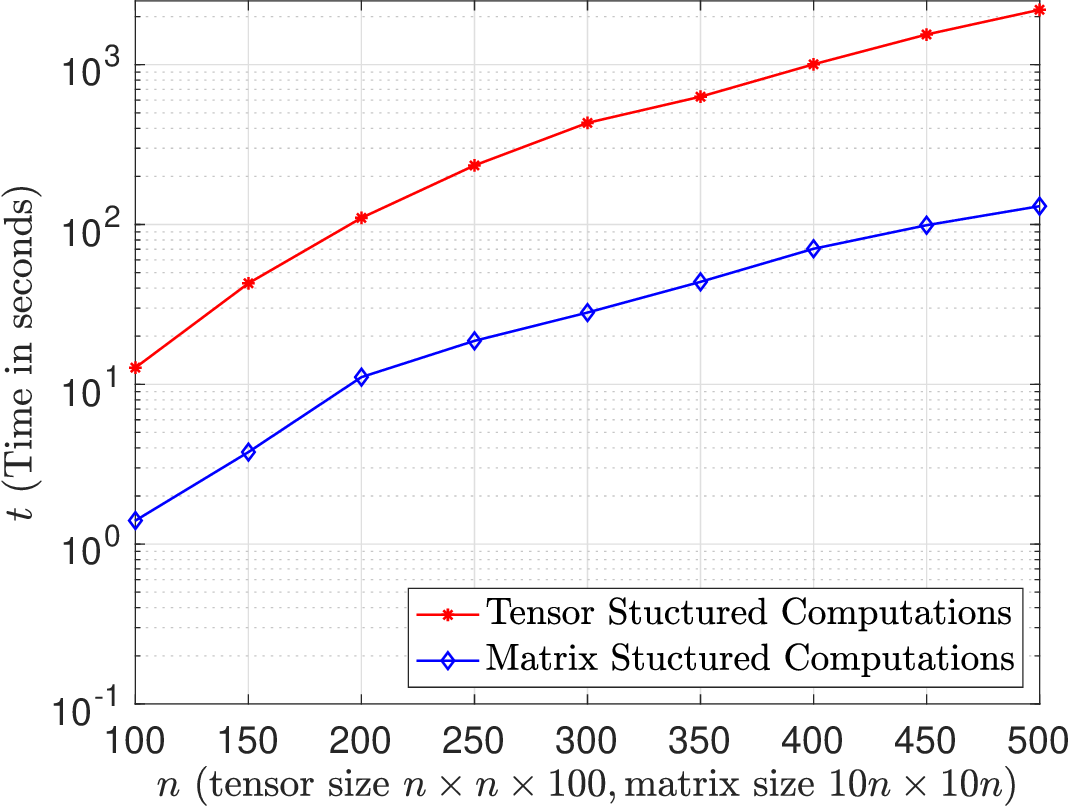}}
\caption{Comparison of mean CPU time for computing the Drazin inverse of tensors and matrices}
\label{fig:drazcomp}
\end{center}
\end{figure}
Note that the presented errors (Error$^{\mbox{\scriptsize ten}}$ and Error$^{\mbox{\scriptsize mat}}$) and mean CPU times (MT$^{\mbox{\scriptsize ten}}$  and  MT$^{\mbox{\scriptsize mat}}$) for different sizes of $\mc{A}$ and $A$ verifies the effectiveness of the algorithm in achieving accurate and efficient computations. The consistency in errors across different tensor and matrix dimensions demonstrates the robustness and reliability of the proposed approach.

\subsection{Lossy Image Compression}
Image compression is essential in our increasingly data-driven world. The direct relationship between compressed image size and storage costs is significant. Further,  compressed images require less bandwidth for transmission, which has several benefits, including reduced network congestion, lower energy consumption, and faster loading times for web pages and applications. Thus, the goal of compression is to minimize the number of bits needed to represent an image while maintaining acceptable quality. The challenge is finding the optimal balance between compression ratio and image quality. This subsection discusses lossy image compression based on $\mc{QDR}$ decomposition. This decomposition represents an interesting approach to lossy compression that balances spatial and frequency domain considerations. Applications of lossy image compression are important in specific domains where its adaptive nature is particularly beneficial.

\begin{figure}[h!]
\begin{center}
\subfigure[fruit and vegetable]{\includegraphics[height=4.5cm]{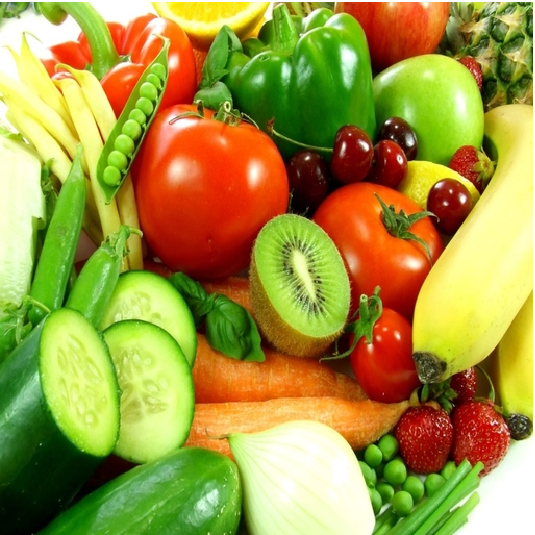}}\hspace{2cm}
\subfigure[baboon]{\includegraphics[height=4.5cm]{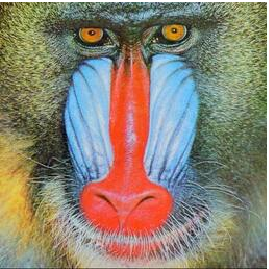}}
\caption{Original images of size: (a)  $512 \times 512 \times 3$;   (b)  $256 \times 256 \times 3$ }\label{LossyImageoriginal}
\end{center}
\end{figure}

\begin{figure}[h!]
\begin{center}
\subfigure[$k=50$]{\includegraphics[height=3cm]{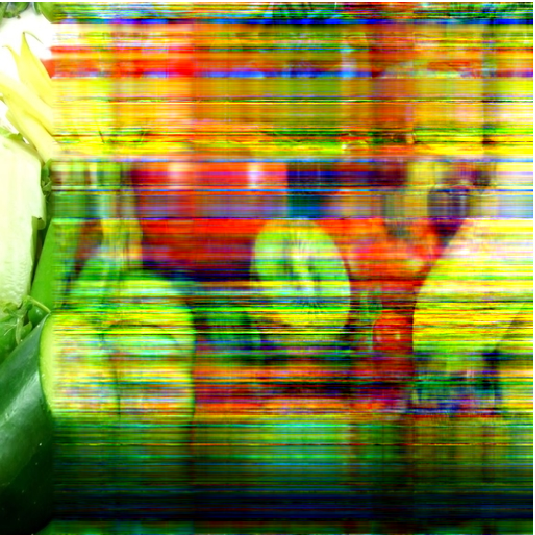}}
\subfigure[$k=100$]{\includegraphics[height=3cm]{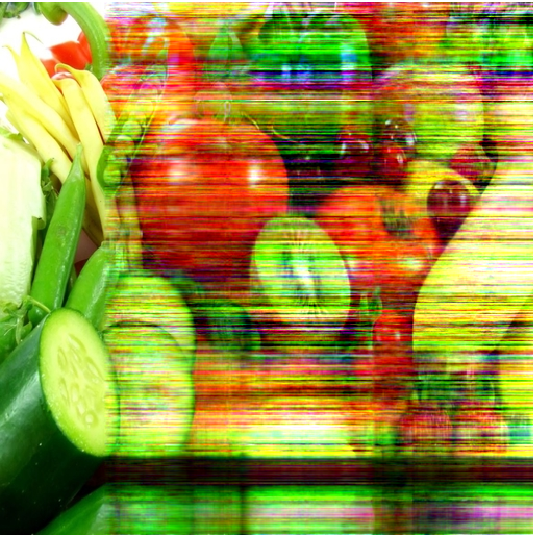}}
\subfigure[$k=150$]{\includegraphics[height=3cm]{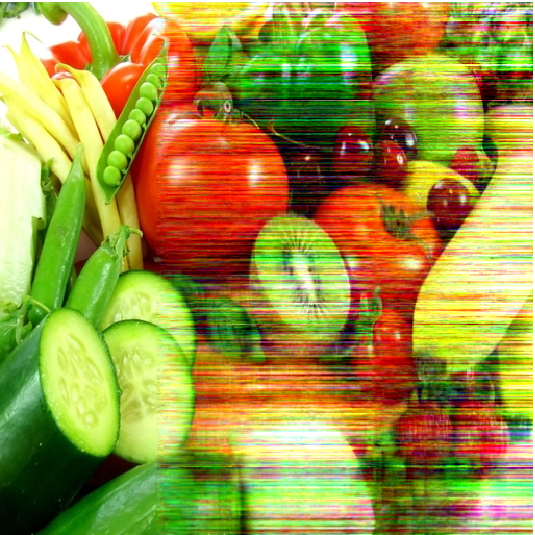}}
\subfigure[$k=200$]{\includegraphics[height=3cm]{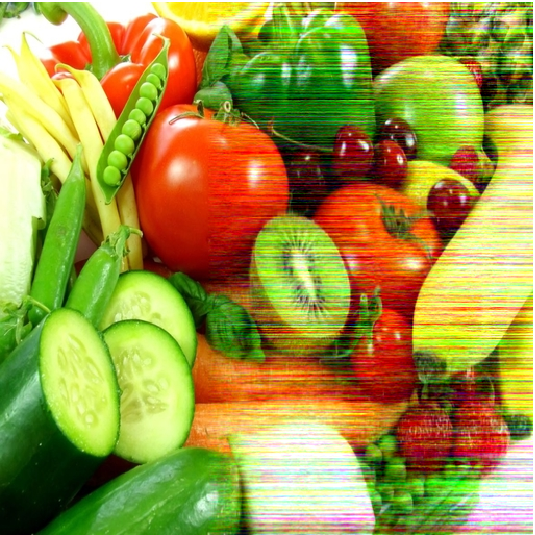}}
\subfigure[$k=250$]{\includegraphics[height=3cm]{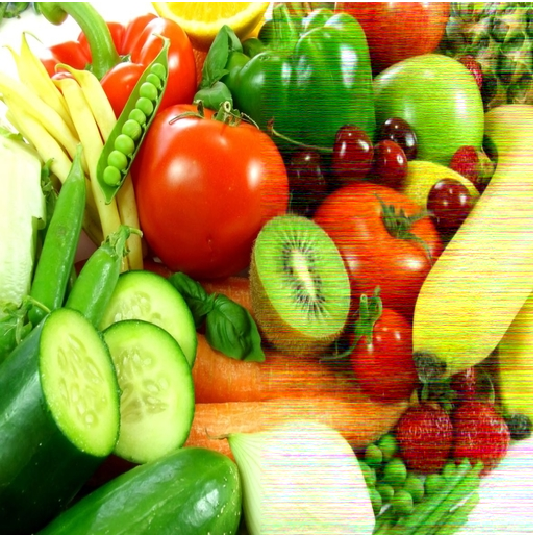}}\\
\subfigure[$k=300$]{\includegraphics[height=3cm]{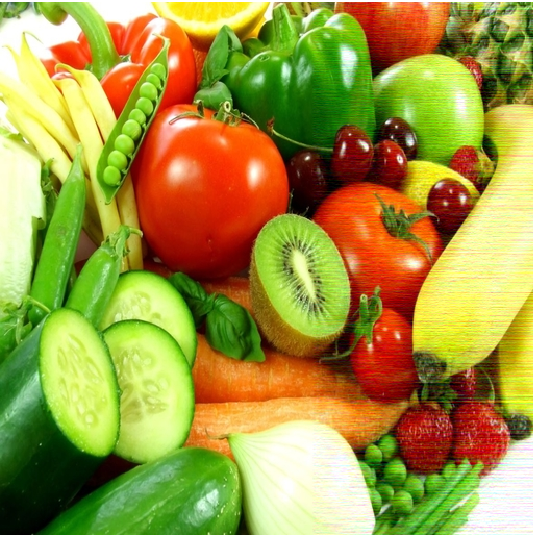}}
\subfigure[$k=350$]{\includegraphics[height=3cm]{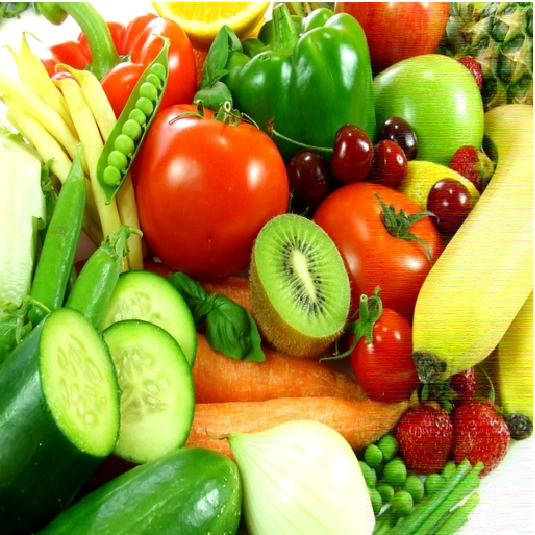}}
\subfigure[$k=400$]{\includegraphics[height=3cm]{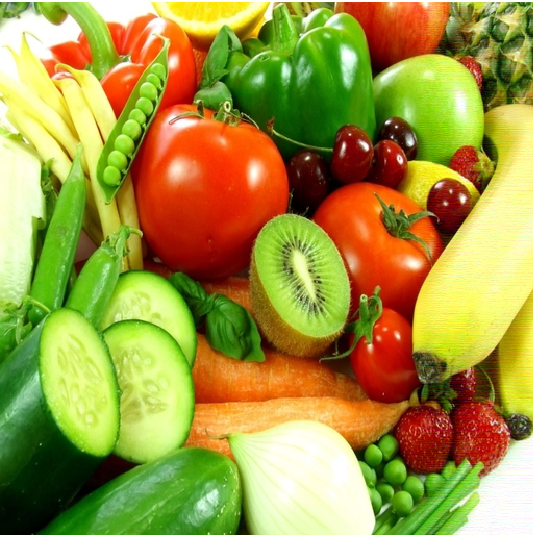}}
\subfigure[$k=450$]{\includegraphics[height=3cm]{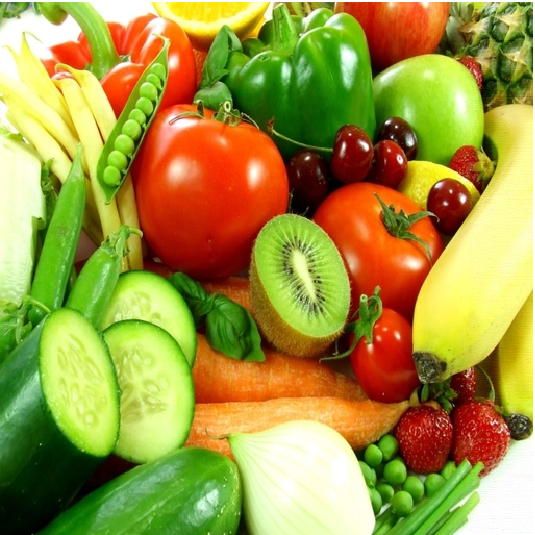}}
\subfigure[$k=500$]{\includegraphics[height=3cm]{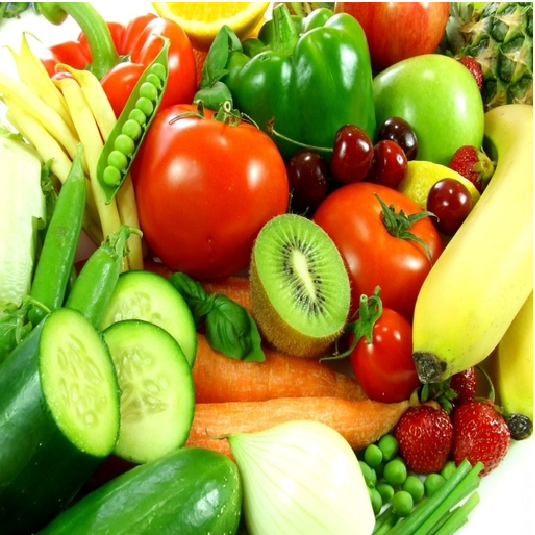}}
\caption{Compressed images of Figure \ref{LossyImageoriginal} (a) with different value of $k$}\label{compLossyImage512}
\end{center}
\end{figure}
QDR decomposition of tensor often provides an attractive algebraic properties, i.e., a good balance between compression ratio and image quality. The color lossy image compression of a tensor $\mc{A} \in \mathbb{R}^{n\times n \times 3}$ can be written in the form of QDR decomposition. 
\begin{equation*}
    \mc{A}=\mc{Q}\m\mc{D}\m\mc{R}, 
\end{equation*}
where $\mc{D} \in \mathbb{R}^{n\times n \times 3}$ is the diagonal tensor, and $\mc{R}\in \mathbb{R}^{n\times n \times 3}$ is the upper triangular tensor.  Here, the diagonal elements of $\mc{D}(:,:,i)$ decay rapidly, where $i=1,2,3$. Further, $d_{11} \geq d_{22} \geq \cdots \geq d_{kk} \geq \cdots \geq d_{nn}.$ QDR approximates this structure efficiently, concentrating most of the image information in the first $k$ columns or rows. The choice of $k$ determines the trade-off, i.e., larger $k$ represents better quality of image and lower compression, whereas smaller $k$ gives lower image quality with higher compression.
\begin{figure}
\begin{center}
\subfigure[$k=25$]{\includegraphics[height=3cm]{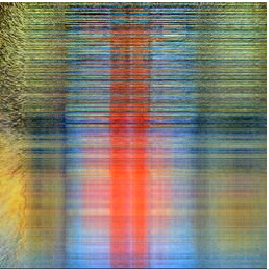}}
\subfigure[$k=50$]{\includegraphics[height=3cm]{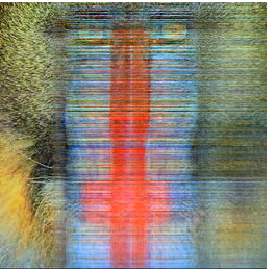}}
\subfigure[$k=75$]{\includegraphics[height=3cm]{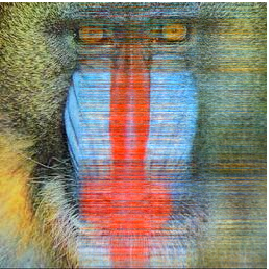}}
\subfigure[$k=100$]{\includegraphics[height=3cm]{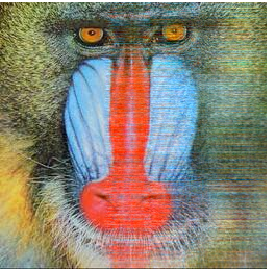}}
\subfigure[$k=125$]{\includegraphics[height=3cm]{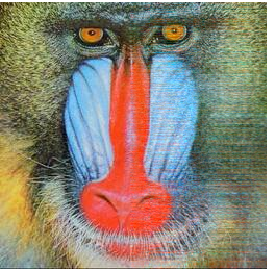}}\\
\subfigure[$k=150$]{\includegraphics[height=3cm]{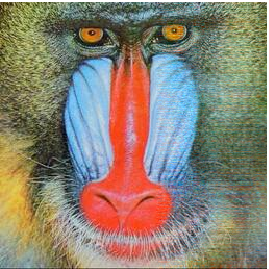}}
\subfigure[$k=175$]{\includegraphics[height=3cm]{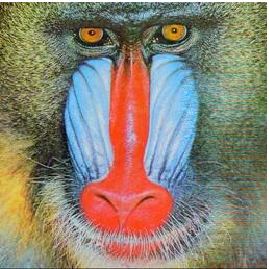}}
\subfigure[$k=200$]{\includegraphics[height=3cm]{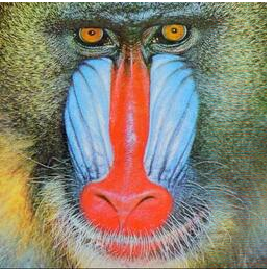}}
\subfigure[$k=225$]{\includegraphics[height=3cm]{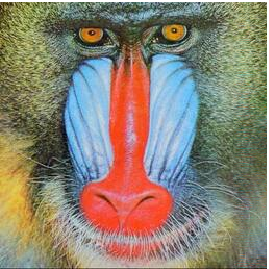}}
\subfigure[$k=250$]{\includegraphics[height=3cm]{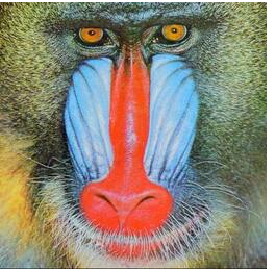}}
\caption{Compressed images of Figure \ref{LossyImageoriginal} (b) with different value of $k$}\label{LossyImage256}
\end{center}
\end{figure}

\begin{table}[H]
\centering
\caption{Comparison of PSNR  and SSIM values for different choices of $k$}
 \vspace*{0.2cm}
\begin{tabular}{|p{1cm}p{2.2cm}p{2.2cm}|p{1cm}p{2.2cm}p{2.2cm}|}
\hline
\multicolumn{3}{|c|}{Figure 4} & \multicolumn{3}{|c|}{Figure 5}  \\ \hline
\multirow{ 1}{*}{$k$} & PSNR Value & SSIM Value &\multirow{ 1}{*}{$k$}& PSNR Value & SSIM Value\\
\hline
50 & ~~~10.15 & ~~~0.6207 &25 & ~~~14.87 & ~~~0.4089 \\
100 & ~~~12.84 & ~~~0.7124 &50 & ~~~16.57 & ~~~0.5567 \\
150 & ~~~17.21 & ~~~0.8213 &75 & ~~~20.68 & ~~~0.7573 \\
200 & ~~~21.65 & ~~~0.8979 &100 & ~~~24.07 & ~~~0.8512\\
150 & ~~~26.84 & ~~~0.9514 &125 & ~~~26.65 & ~~~0.8972\\
300 & ~~~30.27 & ~~~0.9763 &150 & ~~~28.62 & ~~~0.9250 \\
350 & ~~~33.65 & ~~~0.9885 &175 & ~~~30.62 & ~~~0.9445 \\
400 & ~~~37.95 & ~~~0.9944 &200 & ~~~34.56 & ~~~0.9720 \\
450 & ~~~45.08 & ~~~0.9977 &225 & ~~~40.42 & ~~~0.9908 \\
500 & ~~~61.62 & ~~~0.9999 &250 & ~~~53.85 & ~~~0.9994 \\
\hline
\end{tabular}
\label{table2}
\end{table}
We present peak signal-to-noise ratio (PSNR) values in the Table \ref{table2}  to assess image quality quantitatively for Figure \ref{LossyImageoriginal}. This calculation effectively compares the size of the error (represented by MSE) relative to the peak value of the signal {\it{max}}. Specifically, for color image of size $n\times n \times 3$ is computed using the following 
\begin{equation*}
\text{PSNR}=\displaystyle 10 \times \log _{10}\left(\frac{max^{2}}{MSE}\right) 
\end{equation*}
Here {\it{max}} is the maximum possible pixel value of the image (e.g., 255 for 8-bit images). Further, 
${\it MSE}$ is the Mean Squared Error between the original and the processed images. See \cite{behera2023computation} for more detail. It is worth mentioning that the higher PSNR values correlate with superior image quality. PSNR is typically expressed in decibel units (dB). The values between 30-50 dB generally indicate acceptable to good image quality. However, PSNR exceeding 40 dB is considered to signify high-quality images where distortion is minimally perceptible to the human eye. In addition to this we also present another efficient tool for measuring image quality assessment for Figure \ref{LossyImageoriginal} in the Table \ref{table2} i.e., structural similarity index (SSIM). SSIM is a perceptual metric that quantifies image quality degradation caused by processing such as data compression or by losses in data transmission. SSIM values range from $-1$ to $1$, where $1$ indicates perfect similarity between the original and processed images. See \cite{behera2023computation} for computational detail. 

\section{Computation of outer inverses via symbolic computation}\label{sec:outsy}
\begin{example}\rm
 Let $\mc{A}\in\mbc(x)^{2\times 2\times 2}$  with frontal slices are given by 
 \[\mc{A}(:,:,1)=\begin{pmatrix}
     1+x &1+x\\
     2+x & 1-x
 \end{pmatrix},\mbox{ and }\mc{A}(:,:,2)=\begin{pmatrix}
     x &x\\
     x & x
 \end{pmatrix}.\]
Applying algorithm \ref{alg:qdr}, we can compute the $M$-$\mc{QDR}$ decomposition of tensor $\mc{A}$,  where  $\mc{Q}\in \mbc(x)^{2\times 2\times 2},~\mc{D}\in \mbc(x)^{2\times 2\times 2}$ and $\mc{R}\in \mbc(x)^{2\times 2\times 2}$  with respective frontal slices are given by 
\[\mc{Q}(:,:,1)=\displaystyle\begin{pmatrix}
    1+x~~&\displaystyle\frac{36x^3+72x^2+47x+10}{40x^2+60x+25}\\
    &\\
    2+x~~&\displaystyle-\frac{28x^3+46x^2+26x+5}{40x^2+60x+25}
\end{pmatrix},~\mc{Q}(:,:,2)=\displaystyle\begin{pmatrix}
x~~&\displaystyle\frac{4 x^3+8 x^2+3 x}{40 x^2+60 x+25}\\
&\\
x~~&\displaystyle-\frac{12 x^3+14 x^2+4 x}{40 x^2+60 x+25}
\end{pmatrix},\]
\[\mc{D}(:,:,1)=\begin{pmatrix}
    \frac{4 x^2+6 x+5}{40 x^2+60 x+25}&0\\
    &\\
    0&-\frac{14 x^2+16 x+5}{16 x^4+32 x^3+24 x^2+8 x+1}
\end{pmatrix},~\mc{D}(:,:,2)=\begin{pmatrix}
-\frac{4 x^2+6 x}{40 x^2+60 x+25}&0\\
&\\
0&\frac{6 x^2+4 x}{16 x^4+32 x^3+24 x^2+8 x+1}
\end{pmatrix},\]
\[\mc{R}(:,:,1)=\begin{pmatrix}
4x^2+6x+5&2x^2+x+3\\
&\\
0&\frac{56 x^4+120 x^3+98 x^2+36 x+5}{40 x^2+60 x+25}
\end{pmatrix},~\mc{R}(:,:,2)=\begin{pmatrix}
4x^2+6x&2x^2+5\\
&\\
0&\frac{24 x^4+40 x^3+22 x^2+4x}{40 x^2+60 x+25}
\end{pmatrix}.\]
\end{example}
Next, we will discuss the computation of the outer inverse of tensors whose entries are either polynomial in single variable or rational functions, based on symbolic $M$-$\mc{QDR}$ decomposition. Consider $\mc{A} \in \mbc(x)^{m \times n\times p}$ with elements are 
\begin{equation*}
    \mc{A}(i_{1},i_{2},i_{3})=\dfrac{\displaystyle\sum_{d=0}^{\overline{d}_{\mc{A}}} \overline{a}_{d,i_{1},i_{2},i_{3}}x^{d}}{\displaystyle\sum_{d=0}^{\underline{d}_{\mc{A}}} \underline{a}_{d,i_{1},i_{2},i_{3}} x^{d}},
\end{equation*}
where $\overline{d}_{\mc{A}}$ is the maximal degrees of the numerators and 
and $\underline{d}_{\mc{A}}$ is the  maximal degrees denominators of  $\mc{A}$. Let the entries of  $ \Tilde{\mc{Q}}$ and $\Tilde{\mc{R}}$ are in the following form
\begin{eqnarray}
 \Tilde{\mc{Q}}(i_{1},i_{2},i_{3})&=&\frac{\sum_{d=0}^{\overline{d}_{\mc{Q}}} \overline{q}_{d,i_{1},i_{2},i_{3}} x^{d}}{\sum_{d=0}^{\underline{d}_{\mc{Q}}} \underline{q}_{d,i_{1},i_{2},i_{3}} x^{d}}, \quad 1 \leq i_{1} \leq m, 1 \leq i_{2} \leq s, 1\leq i_{3} \leq p,\label{eq:qij}\\
 \Tilde{\mc{R}}(i_{1},i_{2},i_{3})&=&\frac{\sum_{d=0}^{\overline{d}_{\mc{R}}}\overline{r}_{d,i_{1},i_{2},i_{3}} x^{d}}{\sum_{d=0}^{\underline{d}_{\mc{R}}} \underline{r}_{d,i_{1},i_{2},i_{3}} x^{d}}, \quad 1 \leq i_{1} \leq s, i_{1} \leq i_{2} \leq n, 1\leq i_{3} \leq p,\label{eq:rij}
\end{eqnarray}
Variables with an overbar represent the coefficients of the numerator, while variables with an underline represent the coefficients of the denominator, which can be followed subsequently. The representation  of outer inverse of tensors over the field of rational functions is presented in the below theorem.
\begin{theorem}\label{thm:ats2}
Let $M\in\mbc^{p\times p}$, $\mc{A} \in \mbc(x)^{m \times n\times p}$ and  $\mc{W} \in \mbc(x)^{n \times m\times p}$ where  $\mrk{\mc{A}}={\mathbf{r}}$, $\mrk{\mc{W}}={\mathbf{s}}$ with  $s\leq r$. Let $\mc{W}=\mc{Q}\m\mc{D}\m\mc{R}$ be a $M$-$\mc{QDR}$ decomposition of  $\mc{W}$, where the entries  of $\Tilde{\mc{Q}}$ and $\Tilde{\mc{R}}$ respectively are of the form \eqref{eq:qij} and \eqref{eq:rij}. Consider $\mrk{\mc{W}}=\mrk{\mc{R}\m\mc{A}\m\mc{Q}}={\mathbf{s}}$, and denote the arbitrary $(i_{1},i_{2},i_{3})^{th}$  element of $\mc{N}=(\mc{R}*\mc{A}*\mc{Q})^{-1}$ by
\begin{equation}\label{eq:nij}
    \Tilde{\mc{N}}(i_{1},i_{2},i_{3})=\frac{\sum_{d=0}^{\overline{\eta}} \overline{n}_{d, i_{1},i_{2},i_{3}} x^{d}}{\sum_{d=0}^{\underline{\eta}} \underline{n}_{d, i_{1},i_{2},i_{3}} x^{d}}.
\end{equation}
Then the arbitrary $(i_{1},i_{2},i_{3})^{th}$ entries of $\mc{A}_{\rg(\mc{Q}), \nl(\mc{R})}^{(2)}$ is given by 
\[\left(\mc{A}_{\rg(\mc{Q}), \nl(\mc{R})}^{(2)}\right)(i_{1},i_{2},i_{3})=\frac{\overline{P}(i_{1},i_{2},i_{3})}{\underline{P}(i_{1},i_{2},i_{3})},\]
where
\begin{equation}\label{eq:gmij}
 \overline{P}(i_{1},i_{2},i_{3})=\sum_{d=0}^{\underline{P}_{q}-\underline{\alpha}_{q}+\overline{\alpha}_{q}}\left(\sum_{k=1}^{\min\{i_{2}, s\}} \sum_{\ell=1}^s \sum_{t_{1}=0}^{d} \overline{\alpha}_{d,t, k, u, i_{1},i_{2},i_{3},l, m}P_{d-t_{1}, i_{1},i_{2},i_{3}}\right) x^{d}, 
\end{equation}
\[\underline{P}(i_{1},i_{2},i_{3})=\mathrm{PolynomialLCM}\left\{\sum_{k=0}^{\underline{\alpha}_{q}} \underline{\alpha}_{t, k, u, i_{1},i_{2},i_{3},\ell, m} x^{k} \mid m=\overline{1, \min \{i_{2}, s\}}, \ell=\overline{1, s}\right\}=\sum_{d=0}^{\underline{P}_{q}} \underline{P}_{ d, i_{1},i_{2},i_{3}} x^{d},\]
with $
\overline{\alpha}_{q}=\overline{\beta}+\overline{\eta}+\overline{\delta}$, $\underline{\alpha}_{q}=\underline{\beta}+\underline{\eta}+\underline{\delta}$, $\overline{\alpha}_{t, k, u, i_{1},i_{2},i_{3},l,m}\sum_{t=0}^{k} \left(\sum_{u=0}^{k-t} \overline{q}_{t, i_{1},\ell,i_{3}} \overline{n}_{k-t-u, \ell,i_{2},i_{3}}\right)\overline{r}_{u,m,i_{2},i_{3}}$, 
\[  \underline{\alpha}_{t, k, u, i_{1},i_{2},i_{3},l,m}=\sum_{t=0}^{k} \left(\sum_{u=0}^{k-t} \underline{q}_{t, i_{1},\ell,i_{3}} \underline{n}_{k-t-u, \ell,i_{2},i_{3}}\right)\underline{r}_{u,m,i_{2},i_{3}},\]
\[P(t, k, u, i_{1},i_{2},i_{3},l, m)=\frac{\underline{P}(i_{1},i_{2},i_{3})}{ \sum_{k=0}^{\underline{\alpha}_{q}} \underline{\alpha}_{t, k, u, i_{1},i_{2},i_{3},l, m} x^{k}},\quad m=\overline{1, \min \{i_{2}, s\}},  \ell=\overline{1, s}.\]
\end{theorem}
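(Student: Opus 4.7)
The starting point is Theorem \ref{thm:main}, which (together with the invertibility of $\mc{R}\m\mc{A}\m\mc{Q}$ guaranteed by the hypothesis $\mrk{\mc{R}\m\mc{A}\m\mc{Q}}=\mathbf{s}$) already gives the closed form
\[
\mc{A}_{\rg(\mc{Q}),\nl(\mc{R})}^{(2)} \;=\; \mc{Q}\m\mc{N}\m\mc{R}, \qquad \mc{N}=(\mc{R}\m\mc{A}\m\mc{Q})^{-1}.
\]
So the content of the present theorem is purely symbolic bookkeeping: turn this compact identity into an explicit rational function in $x$ for each entry. The plan is to reduce the $M$-product to ordinary matrix multiplication slice-by-slice via Proposition \ref{prop1.11}(i), and then track numerator and denominator polynomials carefully.

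First I would apply Proposition \ref{prop1.11}(i) twice to obtain
\[
\wtilde{\mc{Q}\m\mc{N}\m\mc{R}}(:,:,i_3) \;=\; \Tilde{\mc{Q}}(:,:,i_3)\,\Tilde{\mc{N}}(:,:,i_3)\,\Tilde{\mc{R}}(:,:,i_3),
\]
so the $(i_1,i_2,i_3)$ entry of $\wtilde{\mc{A}_{\rg(\mc{Q}),\nl(\mc{R})}^{(2)}}$ is the scalar
\[
\sum_{\ell=1}^{s}\sum_{m=1}^{\min\{i_2,s\}} \Tilde{\mc{Q}}(i_1,\ell,i_3)\,\Tilde{\mc{N}}(\ell,m,i_3)\,\Tilde{\mc{R}}(m,i_2,i_3),
\]
where the upper limit on $m$ is dictated by the zero-padding of $\Tilde{\mc{R}}$ described in Algorithm \ref{alg:qdr}. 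Substituting the rational representations \eqref{eq:qij}, \eqref{eq:nij}, \eqref{eq:rij} reduces each summand to a ratio of two polynomials in $x$.

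Second, I would compute the numerator and denominator of a single summand by Cauchy products. Multiplying the three numerators produces, at degree $k$,
\[
\overline{\alpha}_{t,k,u,i_1,i_2,i_3,\ell,m} \;=\; \sum_{t=0}^{k}\sum_{u=0}^{k-t}\overline{q}_{t,i_1,\ell,i_3}\,\overline{n}_{k-t-u,\ell,m,i_3}\,\overline{r}_{u,m,i_2,i_3},
\]
and the total numerator degree is $\overline{\alpha}_q=\overline{\beta}+\overline{\eta}+\overline{\delta}$; the denominator coefficients $\underline{\alpha}$ are analogous with underlines, of total degree $\underline{\alpha}_q$. This is a routine degree-tracking computation for three-factor polynomial products.

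Third, to sum the summands into a single rational function, I would take the polynomial least common multiple of the resulting denominators over all pairs $(\ell,m)$; this is exactly the polynomial $\underline{P}(i_1,i_2,i_3)$ displayed in the statement. Multiplying each summand's numerator by its cofactor $P(t,k,u,i_1,i_2,i_3,\ell,m)=\underline{P}(i_1,i_2,i_3)/\bigl(\sum_k \underline{\alpha}_{t,k,u,i_1,i_2,i_3,\ell,m}x^k\bigr)$ aligns all the denominators, and adding yields the numerator $\overline{P}(i_1,i_2,i_3)$ whose $x^d$-coefficient is the inner sum displayed in \eqref{eq:gmij}. The claim then follows by simplifying to $\overline{P}/\underline{P}$. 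The main technical obstacle is not any algebraic depth — it is the combinatorial bookkeeping of the four nested indices $t,u,\ell,m$ and ensuring that the Cauchy convolution for the triple product is written consistently with the LCM rescaling; in particular, one must verify that the upper index on the outer sum in \eqref{eq:gmij} is the correct degree bound $\underline{P}_q-\underline{\alpha}_q+\overline{\alpha}_q$ after the cofactor multiplication.
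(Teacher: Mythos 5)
Your proposal is correct and follows essentially the same route as the paper: invoke Theorem \ref{thm:main} to get $\mc{A}_{\rg(\mc{Q}),\nl(\mc{R})}^{(2)}=\mc{Q}\m\mc{N}\m\mc{R}$, reduce to slice-wise matrix products, expand the entrywise triple product of rational functions via Cauchy convolutions, and combine over $(\ell,m)$ using the polynomial LCM of the denominators. The only cosmetic difference is that the paper performs the product in two stages ($\Tilde{\mc{Q}}\Tilde{\mc{N}}$ first, then multiplication by $\Tilde{\mc{R}}$) rather than as a single triple sum, which does not change the argument.
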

\begin{proof}
From the given assumption in equation \eqref{eq:nij},  we obtain
\[\Tilde{\mc{N}}(:,:,i_{3})=(\Tilde{\mc{R}}(:,:,i_{3})\cdot \Tilde{\mc{A}}(:,:,i_{3})\cdot \Tilde{\mc{Q}}(:,:,i_{3}))^{-1}=\left(\Tilde{\mc{N}}(i_{1},i_{2},i_{3})\right),\] where $1\leq i_{1}, i_{2}\leq s, 1\leq i_{3}\leq p$. Thus,
\begin{eqnarray*}
\left(\Tilde{\mc{Q}}(:,:,i_{3})\cdot \Tilde{\mc{N}}(:,:,i_{3})\right)(i_{1},i_{2},i_{3})&=&\sum_{\ell=1}^{s}\Tilde{\mc{Q}}(i_{1},\ell,i_{3})\Tilde{\mc{N}}(\ell,i_{2},i_{3})\\
&=&\sum_{\ell=1}^{s}\dfrac{\sum_{k=0}^{\overline{\beta}}\overline{q}_{k,i_{1},\ell,i_{3}}x^{k}\sum_{k=0}^{\overline{\eta}}\overline{n}_{k,\ell,i_{2},i_{3}}x^{k}}{\sum_{k=0}^{\underline{\beta}}\underline{q}_{k,i_{1},\ell,i_{3}}x^{k}\sum_{k=0}^{\underline{\eta}}\underline{n}_{k,\ell,i_{2},i_{3}}x^{k}}\\
&=&\sum_{\ell=1}^{s} \frac{\sum_{k=0}^{\overline{\beta}+\overline{\eta}}\left(\sum_{j=0}^k \overline{q}_{j, i_{1},\ell,i_{3}} \overline{n}_{k-j, \ell,i_{2},i_{3}}\right) x^{k}}{\sum_{k=0}^{\underline{\beta}+\underline{\eta}}\left(\sum_{j=0}^k \underline{q}_{j, i_{1},\ell,i_{3}} \underline{n}_{k-j, \ell,i_{2},i_{3}}\right) x^{k}},
\end{eqnarray*}
and 
\begin{eqnarray*}
&&\left(\Tilde{\mc{Q}}(:,:,i_{3})\cdot\Tilde{\mc{N}}(:,:,i_{3})\cdot\Tilde{\mc{R}}(:,:,i_{3})\right)(i_{1},i_{2},i_{3})\\
&&\hspace*{2cm}=\sum_{m=1}^{\min\{i_{2},s\}}(\Tilde{\mc{Q}}(:,:,i_{3})\cdot\Tilde{\mc{N}}(:,:,i_{3}))(i_{1},m,i_{3})\Tilde{\mc{R}}(m,i_{2},i_{3})\\
&&\hspace*{2cm}=\sum_{m=1}^{\min\{i_{2},s\}}\left( \sum_{\ell=1}^{s} \frac{\sum_{k=0}^{\overline{\beta}+\overline{\eta}}\left(\sum_{j=0}^k \overline{q}_{j, i_{1},\ell,i_{3}} \overline{n}_{k-j, \ell,m,i_{3}}\right) x^{k}}{\sum_{k=0}^{\underline{\beta}+\underline{\eta}}\left(\sum_{j=0}^k \underline{q}_{j, i_{1},\ell,i_{3}} \underline{n}_{k-j, \ell,m,i_{3}}\right) x^{k}}\cdot \frac{\sum_{k=0}^{\overline{\delta}}\overline{r}_{k,m,i_{2},i_{3}}x^{k}}{\sum_{k=0}^{\underline{\delta}}\underline{r}_{k,m,i_{2},i_{3}}x^{k}}\right)\\
&&\hspace*{2cm}=\sum_{m=1}^{\min\{i_{2},s\}}\sum_{\ell=1}^s \frac{\sum_{k=0}^{\overline{\beta}+\overline{\eta}+\overline{\delta}}\left(\sum_{t=0}^{k} \left(\sum_{u=0}^{k-t} \overline{q}_{t, i_{1},\ell,i_{3}} \overline{n}_{k-t-u, \ell,i_{2},i_{3}}\right)\overline{r}_{u,m,i_{2},i_{3}}\right) x^{k}}{\sum_{k=0}^{\underline{\beta}+\underline{\eta}+\underline{\delta}}\left(\sum_{t=0}^{k} \left(\sum_{u=0}^{k-t} \underline{q}_{t, i_{1},\ell,i_{3}} \underline{n}_{k-t-u, \ell,i_{2},i_{3}}\right)\underline{r}_{u,m,i_{2},i_{3}}\right) x^{k}}.
\end{eqnarray*}
Applying equation \eqref{eqqdr}, we obtain  the $(i_{1}, i_{2},i_{3})^{th}$ element of $\Tilde{\mc{A}}_{\rg(\mc{Q}), \nl(\mc{R})}^{(2)}$ as follows:
\begin{equation}\label{eq:elmentats2}
    \left(\Tilde{\mc{A}}_{\mathrm{R}(\mc{Q}), \mathrm{N}(\mc{R})}^{(2)}\right)(i_{1},i_{2},i_{3})=\sum_{m=1}^{\min \{i_{2}, s\}}\displaystyle \sum_{\ell=1}^{s} \frac{\sum_{k=0}^{\overline{\alpha}_{q}} \overline{\alpha}_{t, k, u, i_{1},i_{2},i_{3},l, m} x^{k}}{\sum_{k=0}^{\underline{\alpha}_{q}} \underline{\alpha}_{t, k, u, i_{1},i_{2},i_{3},l, m} x^{k}},
\end{equation}
where  $\overline{\alpha}_{q}=\overline{\beta}+\overline{\eta}+\overline{\delta}$, $\underline{\alpha}_{q}=\underline{\beta}+\underline{\eta}+\underline{\delta}$, $
    \overline{\alpha}_{t, k, u, i_{1},i_{2},i_{3},l,m}=\sum_{t=0}^{k} \left(\sum_{u=0}^{k-t} \overline{q}_{t, i_{1},\ell,i_{3}} \overline{n}_{k-t-u, \ell,i_{2},i_{3}}\right)\overline{r}_{u,m,i_{2},i_{3}}$, and  $
    \underline{\alpha}_{t, k, u, i_{1},i_{2},i_{3},l,m}=\sum_{t=0}^{k} \left(\sum_{u=0}^{k-t} \underline{q}_{t, i_{1},\ell,i_{3}} \underline{n}_{k-t-u, \ell,i_{2},i_{3}}\right)\underline{r}_{u,m,i_{2},i_{3}}$. 
Simplifying the expression in equation \eqref{eq:elmentats2}, we get
\[
    \left(\Tilde{\mc{A}}_{\mathrm{R}(\mc{Q}), \mathrm{N}(\mc{R})}^{(2)}\right)(i_{1},i_{2},i_{3})=\frac{\overline{P}(i_{1},i_{2},i_{3})}{\underline{P}(i_{1},i_{2},i_{3})},
\]
where
\[
\underline{P}(i_{1},i_{2},i_{3})=\mathrm{PolynomialLCM}\left\{\sum_{k=0}^{\underline{\alpha}_{q}} \underline{\alpha}_{t, k, u, i_{1},i_{2},i_{3},\ell, m} x^{k} \mid m=\overline{1, \min \{i_{2}, s\}}, \ell=\overline{1, s}\right\}=\sum_{d=0}^{\underline{P}_{q}} \underline{P}_{ d, i_{1},i_{2},i_{3}} x^{d},\]
\[
\overline{P}(i_{1},i_{2},i_{3})=\sum_{m=1}^{\min \{i_{2}, s\}} \sum_{\ell=1}^{s}\left(P(t, k, u, i_{1},i_{2},i_{3},l, m) \sum_{k=0}^{\overline{\alpha}_{q}} \overline{\alpha}_{t, k, u, i_{1},i_{2},i_{3},l, m} x^{k}\right),
\]
and
\[
    P(t, k, u, i_{1},i_{2},i_{3},l, m)=\frac{\underline{P}(i_{1},i_{2},i_{3})}{ \sum_{k=0}^{\underline{\alpha}_{q}} \underline{\alpha}_{t, k, u, i_{1},i_{2},i_{3},l, m} x^{k}}=\sum_{d=0}^{\underline{P}_{q}-\underline{\alpha}_{q}} \gamma_{d,i_{1},i_{2},i_{3}} x^{d}.
\]
Therefore,
\[
    \overline{P}(i_{1},i_{2},i_{3})=\sum_{d=0}^{\underline{P}_{q}-\underline{\alpha}_{q}+\overline{\alpha}_{q}}\left(\sum_{k=1}^{\min\{i_{2}, s\}} \sum_{\ell=1}^s \sum_{t_{1}=0}^{d} \overline{\alpha}_{d,t, k, u, i_{1},i_{2},i_{3},l, m}P_{d-t_{1}, i_{1},i_{2},i_{3}}\right) x^{d},
\]
and completes the proof.
\end{proof}
In a special case, we can obtain the following representation for the Moore-Penrose inverse of tensors over the field of rational functions based on the $M$-$\mc{QDR}$ decomposition. 
\begin{theorem}\label{thm:qdrmpi}
Let $M\in \mbc(x)^{p\times p}$ and $\mc{A}\in \mbc(x)^{m\times n\times p}$ with entries are of the form
\[\mc{A}(i_{1},i_{2},i_{3})=\dfrac{\sum_{d=0}^{\overline{d}_{a}} \overline{a}_{d, i_{1},i_{2},i_{3}}x^{d}}{\sum_{d=0}^{\underline{d}_{a}} \underline{a}_{d, i_{1},i_{2},i_{3}} x^{d}},1\leq i_{1}\leq m, 1\leq i_{2}\leq n, 1\leq i_{3}\leq p.\]
Consider $\mrk{\mc{A}}=\mathbf{s}$ and $\mc{A}=\mc{Q}*_M\mc{D}*_M\mc{R}$ be the  $M$-$\mc{QDR}$ decomposition of $\mc{A}$, where the respective entries of $\Tilde{\mc{Q}} \in \mbc(x)^{m \times s\times p}$ and $\Tilde{\mc{R}}\in \mbc(x)^{s \times n\times p}$, are of the form
\[
\Tilde{\mc{Q}}(i_{1},i_{2},i_{3})=\frac{\sum_{d=0}^{\overline{d}_{\mc{Q}}} \overline{q}_{d, i_{1},i_{2},i_{3}} x^{d}}{\sum_{d=0}^{\underline{d}_{\mc{Q}}} \underline{q}_{d, i_{1},i_{2},i_{3}} x^{d}}, \quad 1 \leq i_{1} \leq m, 1 \leq i_{2} \leq s, 1 \leq i_{3} \leq p,\]
\[\Tilde{\mc{R}}(i_{1},i_{2},i_{3})=\frac{\sum_{d=0}^{\overline{d}_{\mc{R}}} \overline{r}_{d, i_{1},i_{2},i_{3}} x^{d}}{\sum_{d=0}^{\underline{d}_{\mc{R}}} \underline{r}_{d, i_{1},i_{2},i_{3}} x^{d}}, \quad 1 \leq i_{1} \leq s, 1 \leq i_{2} \leq n,  i_{1} \leq j_{1}, 1 \leq i_{3} \leq p.\]
Assume that the entries of $\mc{N}=\left(\mc{Q}^{T}*_M\mc{A}*_M\mc{R}^{T}\right)^{-1} \in \mbc(x)^{s \times s\times p}$ be of the form
\begin{equation}\label{eq:nmp}
    \mc{N}(i_{1},i_{2},i_{3})=\frac{\sum_{d=0}^{\overline{d}_{\mc{N}}} \overline{n}_{d, i_{1},i_{2},i_{3}} x^{d}}{\sum_{d=0}^{\underline{d}_{\mc{N}}} \underline{n}_{d, i_{1},i_{2},i_{3}} x^{d}}.
\end{equation}
Consider $\underline{\theta}=\underline{d}_{\mc{R}}+\underline{d}_{\mc{N}}+\underline{d}_{\mc{Q}}$ and $\overline{\theta}=\overline{d}_{\mc{R}}+\overline{d}_{\mc{N}}+\overline{d}_{\mc{Q}}$. For $k=\overline{1,s}$ and $\ell=\overline{1,n}$ assume that 
\[
\overline{\alpha}_{t, i, k, l, j}=\sum_{t_{1}=0}^{t} \sum_{t_{2}=0}^{t-t_{1}} \overline{r}_{t_{1}, k, i}^{T} \overline{n}_{t-t_{1}-t_{2}, k, l} \overline{q}_{t_{2}, j, l}^{T}, \quad 0 \leq t \leq \overline{\theta},\]
\[
\underline{\alpha}_{t, i, k, l, j}=\sum_{t_{1}=0}^{t} \sum_{t_{2}=0}^{t-t_{1}} \underline{r}_{t_{1}, k, i}^{T} \underline{n}_{t-t_{1}-t_{2}, k, l} \underline{q}_{t_{2}, j, l}^{T}, \quad 0 \leq t \leq \underline{\theta},\]
\[
\underline{P}_{i_{1},i_{2},i_{3}}(x)=\mathrm{PolynomialLCM}\left\{\sum_{d=0}^{\underline{\theta}} \underline{\alpha}_{d,i_{1},i_{2},i_{3},k,\ell} x^{d} \mid k=\overline{1, s}, \ell=\overline{1, n}\right\}\]
with degree $\underline{d}_{P}$.  Then entries of $\mc{A}^{\dagger}$ are given by
\[\mc{A}^{\dg}(i_{1},i_{2},i_{3})=\frac{\overline{P}_{i_{1},i_{2},i_{3}}(x)}{\underline{P}_{i_{1},i_{2},i_{3}}(x)},\]
where
\[
\overline{P}_{i_{1},i_{2},i_{3}}(x)=\sum_{\ell=1}^{n} \sum_{k=1}^{s} \sum_{d=0}^{\underline{d}_{P}-\underline{\theta}+\overline{\theta}}\left(\sum_{t=0}^{d} \overline{\alpha}_{t, i_{1},i_{2},i_{3}, k, \ell} \beta_{d-t, i_{1},i_{2},i_{3},k,\ell}\right) x^{d},\]
$P_{i_{1},i_{2},i_{3},k,\ell}(x)=\frac{\underline{P}_{i_{1},i_{2},i_{3}}(x) }{\sum_{d=0}^{\underline{\theta}} \underline{\alpha}_{d,i_{1},i_{2},i_{3},k,\ell} x^{d} }$, for each $k=\overline{1, s}, \ell=\overline{1, n}$.
\end{theorem}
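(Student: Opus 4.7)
The plan is to derive this entrywise formula for $\mc{A}^{\dg}$ as a direct specialization of Theorem \ref{thm:ats2} to the Moore--Penrose setting. By Theorem \ref{thm:dfrntginv}(i), $\mc{A}^{\dg}=\mc{A}^{(2)}_{\rg(\mc{A}^{*}),\nl(\mc{A}^{*})}$, so I would first convert the hypothesised decomposition $\mc{A}=\mc{Q}\m\mc{D}\m\mc{R}$ into a $M$-$\mc{QDR}$ decomposition of $\mc{A}^{T}$. Using Proposition \ref{prop1.11}(ii) together with the tensorial version of $(\mc{A}\m\mc{B})^{T}=\mc{B}^{T}\m\mc{A}^{T}$, transposition yields $\mc{A}^{T}=\mc{R}^{T}\m\mc{D}^{T}\m\mc{Q}^{T}$, and the nonvanishing frontal-slice conditions of Definition \ref{def:tqdr} are clearly preserved under $(\cdot)^{T}$. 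Applying Theorem \ref{thm:ats2} (or equivalently Corollary \ref{co3.66}(i)) with $\mc{W}=\mc{A}^{T}$ then produces the structural identity
\[
\mc{A}^{\dg}=\mc{R}^{T}\m\bigl(\mc{Q}^{T}\m\mc{A}\m\mc{R}^{T}\bigr)^{-1}\m\mc{Q}^{T}=\mc{R}^{T}\m\mc{N}\m\mc{Q}^{T},
\]
which matches exactly the tensor $\mc{N}$ defined in the hypothesis and supplies the correct range--null space pair for the outer inverse.

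Next I would move to the transform domain. By Proposition \ref{prop1.11}(i) the $M$-product reduces slice-by-slice to the face-wise product, so for each frontal index $i_{3}$ the entry $\Tilde{\mc{A}^{\dg}}(i_{1},i_{2},i_{3})$ equals the double sum
\[
\sum_{k=1}^{s}\sum_{\ell=1}^{n}\Tilde{\mc{R}^{T}}(i_{1},k,i_{3})\,\Tilde{\mc{N}}(k,\ell,i_{3})\,\Tilde{\mc{Q}^{T}}(\ell,i_{2},i_{3}).
\]
Inserting the prescribed rational forms for the three factors, I would apply the Cauchy product formula for polynomials twice per summand: the first product handles $\Tilde{\mc{R}^{T}}\cdot\Tilde{\mc{N}}$ with numerator and denominator degrees bounded by $\overline{d}_{\mc{R}}+\overline{d}_{\mc{N}}$ and $\underline{d}_{\mc{R}}+\underline{d}_{\mc{N}}$, and the second incorporates the factor $\Tilde{\mc{Q}^{T}}$. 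Careful index bookkeeping then yields exactly the triple convolution coefficients $\overline{\alpha}_{t,i_{1},i_{2},i_{3},k,\ell}$ and $\underline{\alpha}_{t,i_{1},i_{2},i_{3},k,\ell}$ stated in the theorem, of total degrees at most $\overline{\theta}$ and $\underline{\theta}$ respectively.

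The remaining step is to collapse the double sum $\sum_{k}\sum_{\ell}\overline{\alpha}/\underline{\alpha}$ of rational functions into a single rational function per entry. For this I would take $\underline{P}_{i_{1},i_{2},i_{3}}(x)$ to be the polynomial LCM of all denominators $\sum_{d=0}^{\underline{\theta}}\underline{\alpha}_{d,i_{1},i_{2},i_{3},k,\ell}x^{d}$ as $(k,\ell)$ ranges over $[1,s]\times[1,n]$, write each cofactor $P_{i_{1},i_{2},i_{3},k,\ell}(x)=\underline{P}_{i_{1},i_{2},i_{3}}(x)\big/\sum_{d}\underline{\alpha}_{d,i_{1},i_{2},i_{3},k,\ell}x^{d}$ as $\sum_{d}\beta_{d,i_{1},i_{2},i_{3},k,\ell}x^{d}$, and perform one final Cauchy product of $\overline{\alpha}$ with $\beta$; the outer sum over $(k,\ell)$ then produces the closed form of $\overline{P}_{i_{1},i_{2},i_{3}}(x)$ claimed in the statement. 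The main obstacle will be purely bookkeeping: managing the six running indices $(i_{1},i_{2},i_{3},k,\ell,t)$, tracking the degree bounds through three nested Cauchy products, and keeping the summation ranges consistent with the shapes dictated by the QDR factors, rather than any genuinely new algebraic input beyond what Theorem \ref{thm:ats2} already provides.
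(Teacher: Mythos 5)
Your proposal is correct and follows exactly the route the paper intends: the paper states Theorem \ref{thm:qdrmpi} without proof as ``a special case'' of Theorem \ref{thm:ats2}, and your argument --- transposing $\mc{A}=\mc{Q}\m\mc{D}\m\mc{R}$ to get an $M$-$\mc{QDR}$ decomposition of $\mc{A}^{*}$, invoking Theorem \ref{thm:dfrntginv}(i) and Corollary \ref{co3.66}(i) to obtain $\mc{A}^{\dg}=\mc{R}^{T}\m\mc{N}\m\mc{Q}^{T}$ with $\mc{N}=(\mc{Q}^{T}\m\mc{A}\m\mc{R}^{T})^{-1}$, and then carrying out the slice-wise Cauchy-product and polynomial-LCM bookkeeping --- is precisely the specialization of the proof of Theorem \ref{thm:ats2} that the paper leaves implicit. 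No gaps; you in fact supply the transposition step that the paper glosses over.
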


Since meromorphic functions are fractions of holomorphic functions, they can be extended as a nonzero polynomial in complex variables. We will compute $\mc{A}^{\dg}(z)$ from a random tensor $\mc{A}(z)$ whose entries are meromorphic functions,  like $e^z$, $\sin(z)$, $\sinh(z)$, $\cos(z)$, $\cosh(z)$ where $z\in\mb{C}$. More generally, when $\mc{A}(z)$ does not contain a holomorphic function, we can use Laurent formal power series to expand it as a truncated polynomial and can find the outer inverse of $\mc{A}(z)$.
\begin{example}\label{exm:ats2}
Consider two tensors $\mc{A}\in\mb{C}(z)^{2\times 2\times 2}$ and $\mc{W}\in\mb{C}(z)^{2\times 2\times 2}$ with frontal slices
 \begin{equation*}
     \mc{A}(:,:,1)=
\begin{pmatrix}
 \dfrac{2\sin z \sinh z}{e^z} & 0 \\
 0 & 0 \\
\end{pmatrix},~ \mc{A}(:,:,2)=\begin{pmatrix}
 0 & 0 \\
 2z & 0 \\
\end{pmatrix};~ \mc{W}(:,:,1)=
\begin{pmatrix}
 1+z & 0 \\
 0 & 0 \\
\end{pmatrix},~ \mc{W}(:,:,2)=\begin{pmatrix}
 0 & 0 \\
 z & 0 \\
\end{pmatrix}.
\end{equation*}
 and an invertible matrix $M=\begin{pmatrix}1&2\\1&1\end{pmatrix}$.
Approximate the tensor $\mc{A}$ using the Laurent expansion up to four-degree terms. 
 \begin{equation*}
     \mc{A}(:,:,1)\approx
\begin{pmatrix}
 \frac{z}{1+z+\frac{z^2}{2}+\frac{z^3}{6} }& 0 \\
 0 & 0 \\
\end{pmatrix} \mbox{ and }\mc{A}(:,:,2)=\begin{pmatrix}
 0 & 0 \\
 2 z& 0  \\
\end{pmatrix}.
 \end{equation*}
Then, $\mrk{\mc{A}}=(1,1)$ and $\mrk{\mc{W}}=(1,1)$. Then following Algorithm \ref{alg:outerM} one can compute $\mc{A}^{(2)}_{\mc{T},\mc{S}}$ as the following:
\begin{eqnarray*}
    \mc{A}^{(2)}_{\mc{T},\mc{S}}(:,:,1)=
\begin{pmatrix}
 \frac{z^3+3 z^2+6 z+6}{6 z} & 0 \\
 0 & 0 
\end{pmatrix},
\mc{A}^{(2)}_{\mc{T},\mc{S}}(:,:,2)=
\begin{pmatrix}
 0 & 0 \\
  \frac{z^3+3 z^2+6 z+6}{6 z+6} & 0 \\
\end{pmatrix}.
\end{eqnarray*}
\end{example}
\noindent We compute $\mc{A}^{\dagger}$ as a special case in the following example.

\begin{example}
Consider $\mc{A}, \mc{W},$ and $M$ as given in Example \ref{exm:ats2}. Using Algorithm \ref{alg:outerM} and Corollary \ref{co3.66}(i) we compute the Moore-penrose inverse of $\mc{A}$, where the entrees of $\mc{A}^\dagger$ are given by
{\tiny
\begin{eqnarray*}
  A^{\dagger}(1,1,1)&=& \frac{21 z^9+189 z^8+945 z^7+3213 z^6+7938 z^5+14742 z^4+20439 z^3+20493 z^2+13770 z+4698}{8 z^{13}+96 z^{12}+624 z^{11}+2784 z^{10}+9288 z^9+24192 z^8+50202 z^7+83484 z^6+110754 z^5+114912 z^4+89424 z^3+47952 z^2+13770 z}\\
  A^{\dagger}(1,2,1)&=& \frac{3 z^{12}+36 z^{11}+234 z^{10}+1044 z^9+3483 z^8+9072 z^7+18792 z^6+31104 z^5+40824 z^4+41472 z^3+31104 z^2+15552 z+3888}{4 z^{13}+48 z^{12}+312 z^{11}+1392 z^{10}+4644 z^9+12096 z^8+25101 z^7+41742 z^6+55377 z^5+57456 z^4+44712 z^3+23976 z^2+6885 z} \\
A^{\dagger}(1,1,2) &=&
 \frac{-9 z^9-81 z^8-405 z^7-1377 z^6-3402 z^5-6318 z^4-8748 z^3-8748 z^2-5832 z-1944}{8 z^{13}+96 z^{12}+624 z^{11}+2784 z^{10}+9288 z^9+24192 z^8+50202 z^7+83484 z^6+110754 z^5+114912 z^4+89424 z^3+47952 z^2+13770 z} \\
 A^{\dagger}(1,2,2) &=&\frac{-2 z^{12}-24 z^{11}-156 z^{10}-696 z^9-2322 z^8-6048 z^7-12519 z^6-20682 z^5-27027 z^4-27216 z^3-20088 z^2-9720 z-2268}{8 z^{13}+96 z^{12}+624 z^{11}+2784 z^{10}+9288 z^9+24192 z^8+50202 z^7+83484 z^6+110754 z^5+114912 z^4+89424 z^3+47952 z^2+13770 z} \\
 A^{\dagger}(2,1,1)&=& A^{\dagger}(2,2,1) ~=~ A^{\dagger}(2,2,1) ~=~  A^{\dagger}(2,1,2)~ =~ A^{\dagger}(2,1,2) ~=~ 0\\
\end{eqnarray*}
}
\end{example}

\section{Conclusion}\label{sec:con}
The tensor forms of the FRD and $M$-$\mc{QDR}$-decomposition were developed based on the $M$-product with powerful algorithms for their computations. The proposed $M$-$\mc{QDR}$-decomposition calculates the $M$-Moore-Penrose inverse and outer inverse of a given third-order tensor whose entries are either symbolic (polynomial in a single variable) or rational. The proposed $M$-$\mc{QDR}$-decomposition is demonstrated by solving a few examples and applied to investigate lossy color image compression.
 

\section*{Data Availability Statement}
The data that generated and support the findings of this study are available from the corresponding author upon reasonable request.

\section*{Conflict of Interest}
The authors declare no potential conflict of interest.

\section*{Funding}
\begin{itemize}
    \item Ratikanta Behera is grateful for the support of the Anusandhan National Research Foundation (ANRF), Department of Science and Technology, India, under Grant No. EEQ/2022/001065. 
\item Jajati Keshari Sahoo is grateful for the support of the Anusandhan National Research Foundation (ANRF), Department of Science and Technology, India, under Grant No. SUR/2022/004357. 
\end{itemize}

\section*{ORCID}
Krushnachandra Panigrahy \orcidC \href{https://orcid.org/0000-0003-0067-9298}{ \hspace{2mm}\textcolor{lightblue}{https://orcid.org/0000-0003-0067-9298}} \\
Ratikanta Behera\orcidA \href{https://orcid.org/0000-0002-6237-5700}{ \hspace{2mm}\textcolor{lightblue}{ https://orcid.org/0000-0002-6237-5700}}\\
Jajati Keshari Sahoo \orcidC \href{https://orcid.org/0000-0001-6104-5171}{ \hspace{2mm}\textcolor{lightblue}{https://orcid.org/0000-0001-6104-5171}} \\
Biswarup Karmakar \orcidC \href{https://orcid.org/0009-0003-5635-5425}{ \hspace{2mm}\textcolor{lightblue}{https://orcid.org/0009-0003-5635-5425}} \\
\bibliographystyle{abbrv}
\bibliography{References}

\end{document}